\documentclass[11pt,reqno]{amsart}
\usepackage[left=1in,right=1in,top=1in,bottom=1in]{geometry}
\usepackage[T1]{fontenc}
\usepackage{lmodern}
\usepackage{microtype}
\usepackage{mathtools}
\usepackage{amssymb}
\usepackage{mathrsfs}
\usepackage{xcolor}
\usepackage[colorlinks=true,linkcolor=blue,citecolor=blue,urlcolor=blue]{hyperref}
\usepackage[noabbrev]{cleveref}

\numberwithin{equation}{section}
\newtheorem{theorem}{Theorem}[section]
\newtheorem{proposition}[theorem]{Proposition}
\newtheorem{lemma}[theorem]{Lemma}
\newtheorem{corollary}[theorem]{Corollary}
\newtheorem*{blackbox}{Miller's black box}
\theoremstyle{remark}
\newtheorem{remark}[theorem]{Remark}
\theoremstyle{definition}

\theoremstyle{plain}

\crefname{theorem}{Theorem}{Theorems}
\Crefname{theorem}{Theorem}{Theorems}
\crefname{proposition}{Proposition}{Propositions}
\Crefname{proposition}{Proposition}{Propositions}
\crefname{lemma}{Lemma}{Lemmas}
\Crefname{lemma}{Lemma}{Lemmas}
\crefname{corollary}{Corollary}{Corollaries}
\Crefname{corollary}{Corollary}{Corollaries}
\crefname{remark}{Remark}{Remarks}
\Crefname{remark}{Remark}{Remarks}
\crefname{definition}{Definition}{Definitions}
\Crefname{definition}{Definition}{Definitions}

\newcommand{\R}{\mathbb R}
\newcommand{\Z}{\mathbb Z}
\newcommand{\C}{\mathbb C}
\newcommand{\eps}{\varepsilon}
\newcommand{\Om}{\Omega}
\newcommand{\pa}{\partial}
\newcommand{\cL}{\mathcal L}
\newcommand{\cA}{\mathcal A}
\newcommand{\cE}{\mathcal E}
\newcommand{\one}{\mathbf 1}
\newcommand{\ip}[2]{\left\langle #1,#2\right\rangle}
\newcommand{\norm}[2][]{\left\lVert #2\right\rVert_{#1}}
\newcommand{\dd}{\,\mathrm d}
\newcommand{\dist}{\operatorname{dist}}
\newcommand{\Ran}{\operatorname{Ran}}
\newcommand{\Spec}{\operatorname{Spec}}
\newcommand{\Id}{\mathrm I}

\newcommand{\rev}[1]{{\color{black}#1}}

\title[The $\eps^{-1}$ eigenvalue cutoff for periodic waves]
{Uniform boundary observability for periodic waves, I:\\
up to the $\boldsymbol{\eps^{-1}}$ eigenvalue scale}

\date{}

\subjclass[2020]{35B27, 35L05, 93B05, 93B07}
\keywords{periodic homogenization, wave equation, boundary observability,
spectral packets, eigenvalue cutoff, Dirichlet correctors}

\author{Jun Geng}
\address{School of Mathematics and Statistics, Lanzhou University, Lanzhou 730000, Gansu, P.R. China.}
\email{gengjun@lzu.edu.cn}

\author{Shumao Wang}
\address{School of Mathematical Sciences, Capital Normal University, Beijing 100048, P.R. China.}
\email{shumaowang@cnu.edu.cn}

\author{Wei Wang}
\address{School of Mathematical Sciences, Peking University, Beijing 100871, P.R. China}
\email{wwmath166@outlook.com}
\email{2201110024@stu.pku.edu.cn}

\begin{document}

\begin{abstract}
Let $\cL_\eps=-\operatorname{div}(A(x/\eps)\nabla)$ be a scalar
divergence-form operator on a bounded $C^3$ domain, where $A$ is symmetric,
uniformly elliptic, Lipschitz continuous, and periodic.  For initial data
spectrally truncated at the eigenvalue level $N$, we establish whole-boundary
observability and projected exact controllability, uniformly in the range
$N\leq\delta\eps^{-1}$.  This extends the cutoff
$N\lesssim\eps^{-1/2}$ obtained by Lin and Shen
\cite{LinShen2022,LinShenErratum2022}.  The key input is a two-sided boundary
estimate for fixed-width spectral packets of $\sqrt{\cL_\eps}$ at frequencies
that satisfy $\eps\mu^2\ll1$.  Its lower bound combines quantitative
Dirichlet corrector estimates with a forcing-stable microscopic boundary
Cauchy estimate.  Miller's spectral packet criterion and the Hilbert
uniqueness method then convert this packet estimate into the corresponding
wave results, with constants independent of $\eps$ and $N$.
\end{abstract}

\maketitle

\section{Introduction}

Let $\Om\subset\R^d$ $ (d\geq 2) $ be a bounded $C^3$ domain, and consider the operator
\begin{equation}\label{eq:operator}
  \cL_\eps u_\eps=-\pa_i\big(a_{ij}(x/\eps)\pa_j u_\eps\big),\qquad 0<\eps<1.
\end{equation}
We impose homogeneous Dirichlet boundary conditions and sum over repeated
indices.  Throughout, we assume that $A=(a_{ij})$ is real, symmetric,
$\Z^d$-periodic, and, for some
$0<\kappa<1$ and $M>0$,
\begin{equation}\label{eq:ellipticity}
 \kappa|\xi|^2\leq A(y)\xi\cdot\xi\leq \kappa^{-1}|\xi|^2,
 \qquad |A(y)-A(z)|\leq M|y-z|,\qquad \xi,y,z\in\R^d.
\end{equation}
At scales much larger than $\eps$, the operator \eqref{eq:operator} is
governed by a constant-coefficient effective operator $\cL_0$.  This classical
homogenization principle
\cite{BensoussanLionsPapanicolaou1978,JikovKozlovOleinik1994,AvellanedaLin1987}
describes each fixed spectral scale, but it does not determine how far the
admissible spectral range may extend as $\eps\to0$, while the estimates remain
uniform.  For a modern account of quantitative periodic homogenization,
including uniform boundary regularity and spectral convergence, see
\cite{Shen2018}.  For complementary operator-norm estimates for periodic
hyperbolic equations in $\R^d$, obtained by a spectral approach, see
\cite{DorodnyiSuslina2018}.
Boundary observation makes this distinction unavoidable, since it detects
both macroscopic wave propagation and the interaction between the wavelength
and the oscillating medium.  We therefore ask the following quantitative
question: \emph{how far into the spectrum can periodic waves be observed with
constants independent of $\eps$?}

Let $\{\lambda_{\eps,k}\}_{k\geq1}$ be the Dirichlet spectrum of
$\cL_\eps$, repeated with multiplicity, and let
$\{\phi_{\eps,k}\}_{k\geq1}$ be an orthonormal eigenbasis of $L^2(\Om)$.
For $N>0$, set
\begin{equation}\label{eq:AN}
 \cA_{\eps,N}=\Ran\one_{(0,N]}(\cL_\eps)
 =\left\{\sum_{\lambda_{\eps,k}\leq N}c_k\phi_{\eps,k}:c_k\in\C\right\}.
\end{equation}

We study solutions of
\begin{equation}\label{eq:wave}
 \begin{cases}
  \pa_t^2u_\eps+\cL_\eps u_\eps=0&\text{in }\Om\times(0,T),\\
  u_\eps=0&\text{on }\pa\Om\times(0,T),\\
  (u_\eps,\pa_tu_\eps)|_{t=0}=(u_{\eps,0},u_{\eps,1}).
 \end{cases}
\end{equation}
Its conserved energy is
\begin{equation}\label{eq:energy}
 \cE_\eps(u_{\eps,0},u_{\eps,1})
 =\int_\Om\big(A(x/\eps)\nabla u_{\eps,0}\cdot
                    \overline{\nabla u_{\eps,0}}
                   +|u_{\eps,1}|^2\big)\dd x.
\end{equation}

For a fixed coefficient field, the Hilbert uniqueness method of Lions
identifies exact boundary controllability with an observability inequality for the adjoint wave; see \cite{Lions1988}. The geometric theory of Bardos, Lebeau, and
Rauch gives the corresponding sharp condition for smooth wave operators
\cite{BardosLebeauRauch1992}. Here, the geometry is fixed, and the additional
question is uniformity: can the observability constants remain bounded as
$\eps\to0$?  We impose the spectral restriction
$\cA_{\eps,N}\times\cA_{\eps,N}$ and allow $N=N(\eps)$ to increase.  The
parameter $N$ is an \emph{eigenvalue} cutoff; the associated wave-frequency
cutoff is $\sqrt N$. Consequently, $\mu$ will always denote the wave frequency,
so $N=\mu^2$ at the spectral level.

The low-frequency restriction is not merely technical.  Nonuniform boundary
observation in rapidly oscillating media was already exhibited by
Avellaneda, Bardos, and Rauch \cite{AvellanedaBardosRauch1992}.
Concentration mechanisms for rapidly oscillating rough coefficients were
developed further by Castro and Zuazua
\cite{CastroZuazua2002concentration,CastroZuazua2007addendum}; these examples
lie outside the periodic Lipschitz class considered here, but they underscore
the genuinely high-frequency nature of obstruction.

For the one-dimensional wave equation with periodic density, Castro and
Castro--Zuazua obtained a detailed description of the low- and high-frequency
regimes \cite{Castro1999,CastroZuazua2000high,CastroZuazua2000low}.  In that
model, the critical wavelength is comparable to the period, and hence the
critical eigenvalue scale is $\lambda\simeq\eps^{-2}$.  In higher dimensions,
Avellaneda and Lin studied weak limits of boundary controls through the
homogenization of the Poisson kernel \cite{AvellanedaLin1989}, while Lebeau
proved low-frequency observability for a wave equation with oscillating
density \cite{Lebeau2000}.  These results reveal the appearance of microscopic
high-frequency effects. Still, the corresponding critical scales do not transfer directly
to \eqref{eq:operator}: in dimensions $d\geq2$, an oscillating density is not
equivalent to an oscillation in the principal part. See the discussions in
\cite{KenigLinShen2013,LinShen2022}.
A closely related loss of uniform observability occurs in space
discretizations of wave equations, where filtering high frequencies is again
essential; see the survey \cite{Zuazua2005}.

For the principal-part operator \eqref{eq:operator}, the important
elliptic information is due to Kenig, Lin, and Shen, who obtained quantitative
bounds for eigenfunctions and unit spectral packets
\cite{KenigLinShen2013}.  In particular, if $\phi_\eps$ is an $L^2$-normalized
Dirichlet eigenfunction with eigenvalue $\lambda_\eps$, their estimates give
the upper bound
\begin{equation}\label{eq:eigenupperintro}
 \int_{\pa\Om}|\nabla\phi_\eps|^2\dd\sigma
 \leq C\lambda_\eps(1+\eps\lambda_\eps)
 \qquad\text{when }\eps^2\lambda_\eps\leq1,
\end{equation}
and, in the scalar case, the matching lower bound
\begin{equation}\label{eq:eigenlowerintro}
 \int_{\pa\Om}|\nabla\phi_\eps|^2\dd\sigma
 \geq c\lambda_\eps
 \qquad\text{when }\eps\lambda_\eps\leq\delta.
\end{equation}
The same work also proves an upper estimate for unit spectral packets.
Taken together, \eqref{eq:eigenupperintro}--\eqref{eq:eigenlowerintro}
identify $\lambda\simeq\eps^{-1}$ as the largest scale on which the available
higher-dimensional elliptic theory gives two-sided boundary control.  For
related normal-derivative bounds in the non-oscillatory setting, see
\cite{HassellTao2002}.

At the hyperbolic level, Lin and Shen
\cite{LinShen2022,LinShenErratum2022} compared the oscillatory wave with a
first-order Dirichlet-corrected homogenized wave.  They then used oscillatory
Rellich identities to convert interior estimates of the comparison remainder into
control of its boundary trace.  In the notation used here, their estimates
\cite{LinShen2022,LinShenErratum2022} yield
\begin{equation}\label{eq:oldcutoff}
                         N\lesssim T^{-1}\eps^{-1/2}.
\end{equation}
See \cite[Theorem~1.1]{LinShen2022} and
\cite[Theorem~0.1]{LinShenErratum2022}.  For a fixed observation time,
\eqref{eq:oldcutoff} is the scale $N\lesssim\eps^{-1/2}$.  On the other hand, Lin-Shen proposed the following natural question in the paper \cite{LinShen2022},

\emph{"These results suggest that one may be able to extend Theorem 1.1 to the case $N\leq C\eps^{-1}$. But this remains unknown."}

The above is \textbf{the main motivation} of our paper. However, a direct use of \cite[Theorem~1.4]{KenigLinShen2013} in a wave expansion does
not close the gap between these two scales.  A wave does not
encounter its eigenfunctions one at a time: nearby frequencies interfere,
their boundary traces are not orthogonal, and in several dimensions no
uniform spectral gap separates them.  What is missing is therefore a lower
bound for entire fixed-width spectral packets that remains stable under the
quasimode remainder carried by such superpositions.  We establish this packet
estimate and thereby attain the scale suggested by the elliptic theory in
\cite{KenigLinShen2013}.

\begin{theorem}
\label{thm:main}
Let $\Om$ be a bounded $C^3$ domain and assume \eqref{eq:ellipticity}.
There exist $\eps_0,\delta,T_0>0$ and $c,C>0$, depending only on
$d,\kappa,M$, and $\Om$, such that the following holds.  If
\begin{equation}\label{eq:newcutoff}
       0<\eps<\eps_0,\qquad 0<N\leq\delta\eps^{-1},\qquad T\geq T_0,
\end{equation}
and the initial data in \eqref{eq:wave} belong to
$\cA_{\eps,N}\times\cA_{\eps,N}$, then
\begin{equation}\label{eq:observability}
 c\,\cE_\eps(u_{\eps,0},u_{\eps,1})
 \leq \frac1T\int_0^T\!\int_{\pa\Om}|\nabla u_\eps|^2\dd\sigma\dd t
 \leq C\,\cE_\eps(u_{\eps,0},u_{\eps,1}).
\end{equation}
The constants in \eqref{eq:observability} are independent of
$\eps,N$, and $T$ in the range \eqref{eq:newcutoff}.
\end{theorem}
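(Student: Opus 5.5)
The plan is to reduce \cref{thm:main} to a two-sided boundary estimate for fixed-width spectral packets of $\sqrt{\cL_\eps}$, and then invoke Miller's spectral packet criterion together with a simple time-averaging argument.

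\textbf{Step 1: reformulation.} Write $u_\eps(t)=\cos(t\sqrt{\cL_\eps})u_{\eps,0}+\sin(t\sqrt{\cL_\eps})\cL_\eps^{-1/2}u_{\eps,1}$. Set $B=\pa_\nu(\cdot)|_{\pa\Om}$, the boundary observation. Since $A$ is symmetric and $u=0$ on $\pa\Om$, we have $|\nabla u|=|\pa_\nu u|$ on $\pa\Om$. Then \eqref{eq:observability} is an observability inequality for the group $e^{it\sqrt{\cL_\eps}}$ restricted to $\Ran\one_{(0,\sqrt N]}(\sqrt{\cL_\eps})$, with observation operator $B\cL_\eps^{-1/2}$.

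\textbf{Step 2: packet estimate.} The central claim I would prove is the following. There are $\eta,\delta,c,C>0$ such that for every $\mu\geq1$ with $\eps\mu^2\leq\delta$ and every $v\in\Ran\one_{[\mu-\eta,\mu+\eta]}(\sqrt{\cL_\eps})$,
\[
c\,\mu^2\|v\|_{L^2}^2\leq\int_{\pa\Om}|\pa_\nu v|^2\dd\sigma\leq C\mu^2\|v\|_{L^2}^2 .
\]

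\emph{Upper bound.} This follows the Kenig--Lin--Shen packet bound. I would use the Rellich identity with a vector field $h$ transversal to $\pa\Om$, plus the correctors, for the quasimode equation $\cL_\eps v=\mu^2v+f$ with $\|f\|\lesssim\mu\|v\|$. The condition $\eps\mu^2\lesssim1$ keeps the factor $1+\eps\lambda$ bounded.

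\emph{Lower bound.} This is the main obstacle. I would compare $v$ with $w=\chi_\mu$-filtered $v_0+\eps\chi(x/\eps)\nabla v_0$, using Dirichlet correctors $\Phi_\eps$, where $v_0$ solves the homogenized quasimode problem $\cL_0v_0=\mu^2v_0+f_0$ with the same data. The quantitative corrector estimates $\|\Phi_\eps-x-\eps\chi\|_\infty\lesssim\eps$ and $\|\nabla(\Phi_\eps-x)\|$ on the boundary layer give an interior error of order $\eps\mu^2\|v\|\le\delta\|v\|$ in $H^1$-type norms, after dividing by $\mu$.

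For the homogenized constant-coefficient packet, the classical Rellich identity gives a boundary lower bound $\gtrsim\mu^2\|v_0\|^2$, with the forcing term absorbed since $\eta$ is small. The delicate part is transferring this to the boundary trace of $v$. An interior $H^1$ error does not control boundary normal derivatives. I would therefore prove a forcing-stable microscopic Cauchy estimate: in a boundary layer of width $\eps$, rescaled to unit scale, the solution $r=v-w$ of $\cL_\eps r=\mu^2r+g$ with $r=0$ on $\pa\Om$ satisfies
\[
\|\pa_\nu r\|^2_{L^2(\pa\Om)}\lesssim \eps^{-1}\|\nabla r\|^2_{L^2(\text{layer})}+\eps\|g\|^2_{L^2(\text{layer})}+\dots
\]
This uses Lipschitz $A$ and $C^3$ boundary, i.e.\ uniform $C^{1,\alpha}$ boundary regularity, in the rescaled variables. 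Combining this with the Rellich identity for $\cL_\eps$ applied to the difference, and using smallness of $\delta$, absorbs the error and yields the lower bound. I would first try the identity directly on $v$ with the vector field $h\cdot\nabla\Phi_\eps$ rather than $h\cdot\nabla$; the commutator terms are then $O(\eps\mu^2)$.

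\textbf{Step 3: Miller's black box and HUM.} Miller's criterion states the following. If $e^{itH}$ is unitary, the packets $\one_{[\mu-\eta,\mu+\eta]}(H)$ satisfy the two-sided observability bound uniformly, and the observation is admissible, then observability holds for $T\geq T_0(\eta)$ with constants depending only on the packet constants. I would apply it with $H=\sqrt{\cL_\eps}$ on the invariant subspace $\Ran\one_{(0,\sqrt N]}$. All packets there have $\mu\leq\sqrt N$, so $\eps\mu^2\leq\delta$.

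Low frequencies $\mu\lesssim1$ are handled by the same estimate, or by the fixed-$\eps$-uniform results of \cite{KenigLinShen2013} with packet width $\eta$ fixed.

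The upper inequality in \eqref{eq:observability} for every $T\ge T_0$, with the $1/T$ normalization, follows by splitting $[0,T]$ into intervals of length $T_0$ and using energy conservation. The same splitting gives the lower bound with $1/T$. The constants then depend only on $d,\kappa,M,\Om$.
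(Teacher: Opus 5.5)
Your overall architecture (two-sided packet bound, then Miller's criterion, then splitting $[0,T]$ into translates) matches the paper's, and your control of the error trace is sound. You need the Dirichlet-corrected approximant $w=v_0+(\Phi_\eps-x)\cdot\nabla v_0$: the periodic form $\eps\chi(x/\eps)\nabla v_0$ does not vanish on $\pa\Om$, so $r=0$ there would fail. With that choice, $\cL_\eps r$ is pointwise $O(|\nabla^2v_0|+\eps|\nabla^3v_0|)$. Your rescaled boundary $H^2$ bound (an upper regularity estimate, not a Cauchy estimate), together with $\|\nabla r\|_{L^2(\Om)}\le C\eps\mu^2$ and $H^3$ regularity of $v_0$ on the $C^3$ domain, then gives $\|\nabla r\|_{L^2(\pa\Om)}^2\le C\eps\mu^4\le C\delta\mu^2$.

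\textbf{Main gap: the main term.} On $\pa\Om$ the trace is $\nabla w=(\nabla\Phi_\eps)\nabla v_0$. But $\nabla(\Phi_\eps-x)$ is of order one throughout the $\eps$-layer (it oscillates like $\nabla_y\chi$). It is not an error term, yet your proposal treats it as one. The Rellich lower bound for $\nabla v_0|_{\pa\Om}$ therefore transfers to $\nabla w|_{\pa\Om}$ only if you know the nondegeneracy $|(\nabla\Phi_\eps)\xi|\ge c_0|\xi|$ at $\pa\Om$. That is \eqref{eq:Phiinvertible}, which the paper takes from Kenig--Lin--Shen's Lemma~5.1 and which holds only in the scalar case. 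Your argument never uses scalarity, which is the symptom that this input is missing. Your fallback, a Rellich identity for $\cL_\eps$ with multiplier $h\cdot\nabla\Phi_\eps$ and $O(\eps\mu^2)$ commutators, is unsupported: differentiating $A(x/\eps)$ produces $O(\eps^{-1})$ terms.

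\textbf{Secondary gap: the homogenized Rellich step.} Define $v_0$ by $\cL_0v_0=\cL_\eps v$; a Helmholtz problem $(\cL_0-\mu^2)v_0=f_0$ is not uniformly solvable near $\Spec\cL_0$. The forcing in the resulting $\cL_0$-quasimode equation then contains $\mu^2(v-v_0)$, whose $L^2$ size is $O(\eps\mu^4)$. So it is not \emph{absorbed since $\eta$ is small}. Cauchy--Schwarz against $x\cdot\nabla v_0$ gives $O(\eps\mu^5)$, which is adequate only for $N\ll\eps^{-2/3}$. The paper gets $O(\eps\mu^4)$ by writing $v-v_0=e_\eps+(\Phi_\eps-x)\cdot\nabla v_0$ with $e_\eps\in H^1_0$, then integrating by parts so that $\|e_\eps\|_{H^1}\le C\eps\mu^2$ is paired with $\|v\|_{L^2}=1$.

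\textbf{Comparison with the paper.} With both repairs, your direct trace comparison does close under $\eps\mu^2\le\delta$. It would be a genuinely simpler route than the paper's. The paper converts the Rellich bound into a collar-mass bound (\cref{lem:layermass}) and returns to the boundary through the compactness--unique-continuation lemma (\cref{lem:microcauchy}) and a good--bad patch decomposition.

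Finally, Miller's criterion must be applied to $S_\eps\oplus(-S_\eps)$, since the wave is not a single group $e^{itS_\eps}$. Admissibility must be derived from the unit-packet upper bounds, as in \cref{lem:admissibility} and \cref{prop:wavepacketcriterion}. These steps are routine.
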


\begin{remark}\label[remark]{rem:comparisonLS}
For fixed $T$, \cref{thm:main} improves the cutoff
$N\lesssim\eps^{-1/2}$ in \cite{LinShen2022,LinShenErratum2022} to
$N\lesssim\eps^{-1}$; once $T\geq T_0$, the new cutoff is also independent of
$T$.  We obtain the improved cutoff by first establishing a boundary lower bound for spectral packets in fixed-width frequency intervals, then applying Miller’s criterion to deduce wave observability. This approach avoids tracking a homogenization remainder over the observation interval.
\end{remark}

\begin{remark}\label[remark]{rem:scope}
Theorem~\ref{thm:main} refers to scalar equations with observation on the whole
boundary.  Systems and observations on a proper subset of $\pa\Om$ are not
treated here and would require new lower bounds of the spectral package.
\end{remark}

\begin{remark}\label[remark]{rem:openrange}
One-dimensional oscillating-density models admit
uniform estimates up to the scale $\eps^{-2}$, but are not equivalent to
\eqref{eq:operator} for
$d\geq2$.  For the present principal-part problem, the range
\[
                         \eps^{-1}<N\lesssim\eps^{-2}
\]
remains open.  Reaching it requires new boundary information and does not follow by iterating the first-order corrector argument, which will be proposed in a forthcoming paper.
\end{remark}

By the Hilbert uniqueness method~\cite{Lions1988}, \cref{thm:main} gives the
corresponding control statement.  Write
$P_{\eps,N}=\one_{(0,N]}(\cL_\eps)$.
On $H^{-1}(\Om)$ this notation denotes the dual spectral projection
\[
 P_{\eps,N}F
 =\sum_{\lambda_{\eps,k}\leq N}
   \ip{F}{\phi_{\eps,k}}_{H^{-1},H^1_0}\,\phi_{\eps,k}.
\]

\begin{corollary}\label[corollary]{cor:control}
Under the assumptions of \cref{thm:main}, for every
$(\theta_{\eps,0},\theta_{\eps,1})\in L^2(\Om)\times H^{-1}(\Om)$ there is a
Dirichlet control $g_\eps\in L^2(\pa\Om\times(0,T))$ such that the weak solution
of
\[
 \pa_t^2v_\eps+\cL_\eps v_\eps=0,\qquad
 v_\eps|_{\pa\Om}=g_\eps,\qquad
 (v_\eps,\pa_tv_\eps)|_{t=0}=(\theta_{\eps,0},\theta_{\eps,1})
\]
satisfies
\[
 P_{\eps,N}v_\eps(T)=P_{\eps,N}\pa_tv_\eps(T)=0.
\]
The minimal-norm control obeys two-sided bounds, uniform in $\eps$ and $N$,
\begin{equation}\label{eq:controlnorm}
 \begin{aligned}
 c_T\big(\norm[L^2]{P_{\eps,N}\theta_{\eps,0}}
          +\norm[H^{-1}]{P_{\eps,N}\theta_{\eps,1}}\big)
 &\leq \norm[L^2(\pa\Om\times(0,T))]{g_\eps}\\
 &\leq C_T\big(\norm[L^2]{P_{\eps,N}\theta_{\eps,0}}
          +\norm[H^{-1}]{P_{\eps,N}\theta_{\eps,1}}\big).
 \end{aligned}
\end{equation}
For $T\geq T_0$, the two constants can be chosen of the order $T^{-1/2}$,
as dictated by the normalization in \eqref{eq:observability}.
\end{corollary}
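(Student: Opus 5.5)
The corollary follows from \cref{thm:main} by the Hilbert uniqueness method, applied on the finite-dimensional invariant space $\cA_{\eps,N}\times\cA_{\eps,N}$. Let $X=\cA_{\eps,N}$. For adjoint data $(w_0,w_1)\in X\times X$, let $w$ solve \eqref{eq:wave}; then $w(t)\in X$ for all $t$, and \cref{thm:main} gives
\[
 c\,T\,\cE_\eps(w_0,w_1)\le\int_0^T\!\int_{\pa\Om}|\nabla w|^2\dd\sigma\dd t\le C\,T\,\cE_\eps(w_0,w_1).
\]
Since the trace involves only the normal derivative, and $|\nabla w|=|\pa_\nu w|$ on $\pa\Om$ for Dirichlet data, I will take the observation to be the conormal derivative $\pa_{\nu_\eps}w=n\cdot A(x/\eps)\nabla w$. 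By uniform ellipticity it is comparable to $\pa_\nu w$ pointwise, so the bounds are unchanged up to constants.

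\emph{Step 1: the duality identity.} For a smooth control $g$, multiply the controlled equation by $\overline w$ and integrate over $\Om\times(0,T)$. Green's formula gives
\[
 \Big[\ip{\pa_tv}{w}-\ip{v}{\pa_tw}\Big]_0^T=-\int_0^T\!\int_{\pa\Om}g\,\overline{\pa_{\nu_\eps}w}\dd\sigma\dd t,
\]
which extends by density to transposition solutions with $g\in L^2$, $(\theta_0,\theta_1)\in L^2\times H^{-1}$. Because $w(T),\pa_tw(T)$ range over all of $X$, the conditions $P_{\eps,N}v(T)=P_{\eps,N}\pa_tv(T)=0$ are equivalent to the vanishing of the left side at $t=T$ for all such $w$. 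I will parametrize $w$ by its data at time $T$ and solve backward; this is harmless by time reversibility and conservation of energy.

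\emph{Step 2: HUM construction.} Define the quadratic functional on $X\times X$
\[
 J(w_T)=\tfrac12\int_0^T\!\int_{\pa\Om}|\pa_{\nu_\eps}w|^2\dd\sigma\dd t-\Big(\ip{\theta_1}{w(0)}-\ip{\theta_0}{\pa_tw(0)}\Big).
\]
By the lower bound of \cref{thm:main}, $J$ is strictly convex and coercive in the energy norm, so it has a unique minimizer. The finite dimensionality of $X$ makes existence trivial. Set $g=\pa_{\nu_\eps}\hat w$ (with sign conventions fixed by Step 1). The Euler--Lagrange equation is exactly the null condition on $P_{\eps,N}$ of the final state, and $g$ is the minimal-norm control. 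Only the pairings of $(\theta_0,\theta_1)$ with $X$ enter, namely $P_{\eps,N}\theta_0$ and $P_{\eps,N}\theta_1$, using the dual projection.

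\emph{Step 3: norm bounds.} Let $\Lambda$ be the Gramian operator on $X\times X$. The minimal-norm control satisfies $\|g\|^2=\ip{\Lambda^{-1}\ell}{\ell}$, where $\ell$ is the linear form above. The energy norm on $X\times X$ is $H^1_0\times L^2$, and its dual is $H^{-1}\times L^2$. On $X$ the dual norms coincide with the $H^{-1}$ and $L^2$ norms of the projected data, since $P_{\eps,N}$ is orthogonal in the $\cL_\eps$-energy scale and the $\eps$-energy norm is uniformly equivalent to $H^1_0$. The two-sided Gramian bound $cT\le\Lambda\le CT$ therefore gives $\|g\|^2\simeq T^{-1}(\|P_{\eps,N}\theta_0\|_{L^2}^2+\|P_{\eps,N}\theta_1\|_{H^{-1}}^2)$. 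This yields \eqref{eq:controlnorm} with constants of order $T^{-1/2}$.

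\emph{Main obstacle.} I expect the main care to lie in the norm identification. The dual of $\cA_{\eps,N}$ with the energy norm must match $\|P_{\eps,N}\theta_1\|_{H^{-1}}$ uniformly in $\eps$. I would handle this with the spectral $\cL_\eps^{-1/2}$-norm, which is uniformly equivalent to $H^{-1}$, and then check that the swap of $\theta_0,\theta_1$ roles in the pairing matches $L^2\times H^{-1}$. The remaining steps are routine given the finite dimensionality.
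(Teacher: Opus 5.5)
Your proposal is correct and follows essentially the same route as the paper. Both arguments run HUM on the finite-dimensional space $\cA_{\eps,N}\times\cA_{\eps,N}$, with the conormal derivative as the observation, and combine the following ingredients:
\begin{itemize}
\item the two-sided Gramian bound $cT\leq G_T\leq CT$ coming from \cref{thm:main};
\item the identity $\|g\|^2=\langle G_T^{-1}\ell,\ell\rangle$ for the minimal-norm control;
\item the identification of the dual norm through the spectral norm $\|\cL_\eps^{-1/2}\cdot\|_{L^2}$, which is uniformly equivalent to $H^{-1}$.
\end{itemize}
The differences are cosmetic. You minimize the functional $J$ and parametrize the adjoint waves by their terminal data. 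The paper instead solves the equivalent variational equation by Lax--Milgram on initial data, and then uses that the adjoint wave group maps $E_{\eps,N}$ onto itself.
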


We now outline the main point of the proof for the observability estimate. Set $S_\eps=\sqrt{\cL_\eps}$ and, for $r>0$,
$\Om_r=\{x\in\Om:\dist(x,\pa\Om)<r\}$.  \rev{For a fixed, sufficiently small packet width $\gamma>0$, we establish
the following stationary boundary estimate for spectral packets of $S_\eps$:}
\begin{equation}\label{eq:packetintro}
 c\mu^2\norm[L^2(\Om)]{f}^2
 \leq\norm[L^2(\pa\Om)]{\nabla f}^2
 \leq C\mu^2\norm[L^2(\Om)]{f}^2
\end{equation}
whenever
\[
 f\in\Ran\one_{[(\mu-\gamma)_+,\mu+\gamma]}(\sqrt{\cL_\eps}),
 \qquad \eps\mu^2\leq\delta.
\]
\rev{Each function in this packet space satisfies the quasimode estimate}
\begin{equation}\label{eq:packetresidualintro}
  (\cL_\eps-\mu^2)f=q,
  \qquad \norm[L^2(\Om)]{q}\lesssim\gamma\mu\norm[L^2(\Om)]{f}.
\end{equation}
The upper bound follows from \cite[Theorem~4.3]{KenigLinShen2013}; the new
assertion is the lower bound on the entire packet space.  This distinction is
essential: estimate \eqref{eq:eigenlowerintro} concerns one exact frequency,
while \eqref{eq:packetintro} must remain coercive under arbitrary
interference among all frequencies in a certain interval.  Neither orthogonality of
boundary traces nor a uniform spectral gap is available to recover this
coercivity by summing the eigenfunction estimates.

The remainder in \eqref{eq:packetresidualintro} is the price of passing from a
mode to a packet and cannot be discarded.  Its control is global, and on an
individual boundary cylinder of radius $\eps$, the forcing may still be large
compared to the local gradient energy.  The Cauchy mechanism behind
\cite[Lemma~5.3]{KenigLinShen2013} must therefore be made stable under forcing
and then coupled to the global packet bound.

The proof achieves this through a single energy transfer.  The
Dirichlet corrector and a constant-coefficient Rellich identity first place a
definite amount of gradient energy in an $\eps$-collar.  A microscopic
boundary Cauchy estimate can then be used on those $\eps$-cylinders where the
packet energy does not
degenerate and the local forcing is perturbative.  The good–bad decomposition allows us to combine this local principle with the global estimate
\eqref{eq:packetresidualintro}: bounded overlap shows that cylinders 
failing either condition carry only an absorbable portion of the collar
energy.  The resulting mechanism is summarized by the uniform chain
\begin{equation}\label{eq:packetmechanismintro}
 \begin{aligned}
  \norm[L^2(\pa\Om)]{\nabla f}^2
  &\geq \frac c\eps\int_{\Om_{\rho\eps}}|\nabla f|^2\dd x\\
  &\geq \mu^2\big(c-C\eps\mu^2-C\gamma\big)
          \norm[L^2(\Om)]{f}^2.
 \end{aligned}
\end{equation}
The first line of \eqref{eq:packetmechanismintro} is the new
remainder-stable passage from microscopic collar mass to boundary mass; the
second is the corrector--Rellich estimate.  Their combination converts global
quasimode control into boundary coercivity without resolving the packet into
individual eigenfunctions.

Now, the improved cutoff follows from the two error terms in \eqref{eq:packetmechanismintro}.  We first fix $\gamma$ so that the packet error is absorbed and
then impose $\eps\mu^2\leq\delta$ to absorb the homogenization error.  Since
$N=\mu^2$, this is exactly the range $N\lesssim\eps^{-1}$.  Once \eqref{eq:packetintro} is established, Miller's criterion \cite{Miller2012} yields fixed-time wave observability from this
spectral package estimate, and time translation together with HUM gives the control statement.  The new exponent therefore does not arise from a longer asymptotic expansion. Still, from assigning
the first-order corrector a different role: it creates stationary collar
coercivity rather than tracking a corrected wave for time $T$. The boundary
lower bound is established at the packet level, and the wave evolution is
reconstructed only afterward.

The paper is organized as follows.  In \cref{sec:elliptic} we collect 
Dirichlet corrector estimates and upper bounds for spectral packets.  In
\cref{sec:lowerpacket} we prove the stable microscopic Cauchy estimate, the
boundary-layer mass estimate, and the two-sided packet theorem.  In
\cref{sec:observability} we verify the abstract admissibility hypotheses,
apply Miller's criterion, and carry out the projected HUM construction.

\section{Elliptic inputs and spectral packets}\label{sec:elliptic}

\rev{Unless otherwise indicated, $c$ and $C$ denote positive constants
depending only on $d$, $\kappa$, $M$, and $\Om$.
Their values may vary from line to line.}
The homogenized operator is denoted by
$\cL_0=-\operatorname{div}(\widehat A\nabla)$.

For $1\leq j\leq d$, the Dirichlet corrector $\Phi_{\eps,j}$ is the solution of
\begin{equation}\label{eq:DirCorrector}
 \cL_\eps\Phi_{\eps,j}=0\quad\text{in }\Om,
 \qquad \Phi_{\eps,j}=x_j\quad\text{on }\pa\Om.
\end{equation}
\rev{We write $\Phi_\eps=(\Phi_{\eps,1},\ldots,\Phi_{\eps,d})$ and
use the notation
\begin{equation}\label{eq:gradientmatrixconvention}
 \nabla\Phi_\eps
 =\left(\frac{\partial\Phi_{\eps,j}}{\partial x_i}\right)_{1\leq i,j\leq d}.
\end{equation}}

\rev{The following proposition collects the estimates for Dirichlet
correctors that will be used below.}

\begin{proposition}
\label[proposition]{prop:corrector}
Let $F\in L^2(\Om)$, and let $u_\eps,u_0\in H^1_0(\Om)$ be the weak solutions of
\[
                    \cL_\eps u_\eps=F=\cL_0u_0.
\]
Then
\begin{align}
 \norm[H^1_0(\Om)]{u_\eps-u_0-(\Phi_\eps-x)\cdot\nabla u_0}
     &\leq C\eps\norm[L^2(\Om)]{F},\label{eq:H1corrector}\\
 \norm[L^\infty(\Om)]{\Phi_\eps-x}&\leq C\eps.\label{eq:Phibound}
\end{align}
Moreover, there are $c_0,\eps_0>0$ such that
\begin{equation}\label{eq:Phiinvertible}
             |(\nabla\Phi_\eps(x))\xi|\geq c_0|\xi|
 \quad\text{for a.e. }x\in\Om_{c_0\eps},\quad 0<\eps<\eps_0.
\end{equation}
\end{proposition}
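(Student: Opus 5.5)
The three assertions are largely standard results of quantitative periodic homogenization, and I would assemble them from the literature, mainly \cite{Shen2018} and \cite{KenigLinShen2013}, adding only a short argument for \eqref{eq:Phiinvertible}.

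\textbf{The bound \eqref{eq:Phibound}.} Set $w_\eps=\Phi_{\eps,j}-x_j-\eps\chi_j(x/\eps)$, where $\chi_j$ is the periodic cell corrector. Then $\cL_\eps w_\eps=0$ in $\Om$, because $\cL_\eps(x_j+\eps\chi_j(x/\eps))=0$. On $\pa\Om$ we have $w_\eps=-\eps\chi_j(x/\eps)$. Since $A$ is Lipschitz, $\chi_j\in L^\infty$, so the maximum principle gives $\norm[L^\infty]{w_\eps}\le C\eps$. Together with $\norm[L^\infty]{\eps\chi_j(\cdot/\eps)}\le C\eps$, this yields \eqref{eq:Phibound}.

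\textbf{The estimate \eqref{eq:H1corrector}.} This is the known $O(\eps)$ $H^1$ rate with Dirichlet correctors in $C^{1,1}$ domains, which our $C^3$ domains satisfy. The route is as follows. Set $r_\eps=u_\eps-u_0-(\Phi_\eps-x)\cdot\nabla u_0$; it lies in $H^1_0$ because $\Phi_\eps-x$ vanishes on $\pa\Om$. Compute $\cL_\eps r_\eps$ using the flux correctors $\phi_{kij}$ (skew-symmetric, with $b=\widehat A-A-A\nabla\chi$ written as a divergence). The residual then takes the form
\[
\operatorname{div}\big(\eps\,G(x/\eps)\nabla^2u_0\big)+\operatorname{div}\big(A\,\nabla(\Phi_\eps-x-\eps\chi)\,\nabla u_0\big)+\ldots
\]
The first term is bounded in $H^{-1}$ by $C\eps\norm[H^2]{u_0}\le C\eps\norm[L^2]{F}$, using $H^2$ regularity for $\cL_0$. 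The delicate term involves $\nabla(\Phi_\eps-x-\eps\chi(x/\eps))\,\nabla u_0$. I would handle it with the estimate $\norm[L^\infty]{\nabla\Phi_\eps}\le C$, which is the uniform Lipschitz estimate of Avellaneda--Lin \cite{AvellanedaLin1987} for the Dirichlet corrector in $C^{1,\alpha}$ domains, combined with the boundary-layer bound
\[
\norm[L^2(\Om)]{\nabla w_\eps\,\nabla u_0}\le C\eps\norm[H^2]{u_0}.
\]
This last bound holds because $\nabla w_\eps$ is $O(1)$ only in the layer $\Om_{C\eps}$ and decays away from it, while $\norm[L^2(\Om_{\eps})]{\nabla u_0}\le C\eps^{1/2}\norm[H^2]{u_0}$. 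This is exactly \cite[Theorem~3.?]{Shen2018} / Kenig--Lin--Shen; I would cite it rather than redo the computation.

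\textbf{The non-degeneracy \eqref{eq:Phiinvertible}.} This is the main new-looking step. Fix $x_0\in\pa\Om$ and rescale: set $v(y)=\eps^{-1}(\Phi_\eps(x_0+\eps y)-x_0)$. Then $v$ solves $\operatorname{div}(A(y+x_0/\eps)\nabla v)=0$ in the rescaled domain $\Om^\eps=\eps^{-1}(\Om-x_0)$, with $v=y$ on $\pa\Om^\eps$. The rescaled domain is locally $C^3$-close to a half-space, uniformly in $\eps$.

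By \eqref{eq:Phibound}, $|v-y|\le C$. Uniform $C^{1,\alpha}$ estimates up to the boundary hold here because $A$ is Lipschitz and the boundary is flat at unit scale. They give $|\nabla v(y)-\nabla v(y')|\le C|y-y'|^\alpha$ on $B(0,2)\cap\Om^\eps$.

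On the boundary, the tangential derivatives of $v$ equal those of $y$. The normal derivative then follows from the conormal derivative: I claim $n\cdot A\nabla v_j$ restricted to the boundary gives a matrix $\nabla v$ whose determinant is bounded below. I am unsure of this step, and would argue by contradiction and compactness. Suppose there were a sequence $\eps_k$, points $x_k$, and unit vectors $\xi_k$ with $\nabla v_k(0)\xi_k\to0$. Pass to a limit: a half-space solution $v$ with $v=y$ on the boundary, $|v-y|\le C$, periodic-type coefficients (a translate limit), and $\nabla v(0)\xi=0$.

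Now $\xi$ decomposes into tangential and normal parts, and the tangential part is preserved by $\nabla v$. Hence $\xi$ can be taken to have a nonzero normal part, and $\partial_n v(0)$ must lie in the tangent space. The scalar function $\xi\cdot(v-y)$ solves an elliptic equation, has zero boundary data, and is bounded. Liouville theorems for bounded solutions in a half-space with zero Dirichlet data force it to be a multiple of $y_n$; boundedness then forces that multiple to be zero. So $v=y$ up to correction, and $\nabla v(0)\xi=\xi\neq0$, a contradiction. Here I am glossing over that the limit solution need not be exactly $y$: the coefficient is still oscillatory at unit scale. The Liouville step must instead use the half-space Dirichlet corrector structure, namely that $v-y$ is bounded with $\nabla v$ having a nondegenerate normal component via Hopf-type positivity for $v_n$, the coordinate with $v_n=0$ on the boundary and $v_n>0$ inside. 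Hopf's lemma for $v_n$ gives $\partial_n v_n\ge c$ at the boundary, which with tangential identity yields invertibility at boundary points. $C^\alpha$ continuity of $\nabla v$ then extends it to $\Om_{c_0\eps}$.

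This Hopf/compactness step is where I expect the real difficulty. Uniformity of Hopf's lemma for the $y_n$-coordinate requires a lower bound $v_n(y)\ge c\,\dist(y,\pa)$. I would obtain it from $|v_n-y_n|\le C$ at large distance combined with the Harnack inequality down to unit scale.
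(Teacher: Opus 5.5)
\textbf{Main issue: the sketch for \eqref{eq:H1corrector}.} Your maximum-principle argument for \eqref{eq:Phibound}, applied to $\Phi_{\eps,j}-x_j-\eps\chi_j(x/\eps)$, is correct. Citing the $O(\eps)$ rate for \eqref{eq:H1corrector} is exactly what the paper does (formula (2.8) of \cite[Theorem~2.4]{KenigLinShen2013}). However, the mechanism you sketch for \eqref{eq:H1corrector} cannot produce that rate.

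The bound $\norm[L^2(\Om)]{\nabla w_\eps\,\nabla u_0}\le C\eps\norm[H^2]{u_0}$ is false in general. On $\pa\Om$ we have $w_\eps=-\eps\chi(x/\eps)$, whose tangential gradient has unit size, so $|\nabla w_\eps|$ is of order one on a collar of width of order $\eps$. Meanwhile $\nabla u_0=(\pa_nu_0)n$ does not vanish on $\pa\Om$. Hence that norm is of order $\eps^{1/2}$, which is also all your own layer computation yields.

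Estimating the term in divergence form, i.e. pairing $A(x/\eps)\nabla w_\eps\nabla u_0$ with $\nabla\psi$, discards the vanishing of the test function $\psi\in H^1_0(\Om)$ on $\pa\Om$. That vanishing is exactly what the Dirichlet corrector is meant to exploit. The correct step is as follows.
\begin{itemize}
\item Use $\cL_\eps w_{\eps,j}=0$ to rewrite the term as $A(x/\eps)\nabla w_{\eps,j}\cdot\nabla\pa_ju_0$, and pair it with $\psi$ itself.
\item The interior Lipschitz estimate of \cite{AvellanedaLin1987} on $B(x,\dist(x,\pa\Om)/2)$, together with $\norm[L^\infty]{w_\eps}\le C\eps$, gives $\dist(x,\pa\Om)\,|\nabla w_\eps(x)|\le C\eps$.
\item Hardy's inequality $\norm[L^2]{\psi/\dist(\cdot,\pa\Om)}\le C\norm[L^2]{\nabla\psi}$ then bounds the pairing by $C\eps\norm[H^2]{u_0}\norm[L^2]{\nabla\psi}$.
\end{itemize}

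\textbf{On \eqref{eq:Phiinvertible}.} The paper gives no argument of its own. It takes the pointwise matrix bound (5.5) from \cite[Lemma~5.1]{KenigLinShen2013} together with the uniform boundary Lipschitz bound. Your blow-up and Hopf route is a legitimate self-contained alternative, and it shows exactly where the scalar structure enters. It must, however, be organized differently from your write-up.

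First, drop the Liouville paragraph: $\xi\cdot(v-y)$ satisfies $-\operatorname{div}(B\nabla(\xi\cdot(v-y)))=\operatorname{div}(B\xi)$, which is not zero. Second, at fixed $\eps$ the component $n(P)\cdot v$ equals $n(P)\cdot y$ on the curved rescaled boundary. That function neither vanishes nor has a sign globally, so Hopf's lemma cannot be applied directly.

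Instead, run a contradiction argument:
\begin{itemize}
\item Take $\eps_k\to0$, points at rescaled distance $o(1)$ from the boundary, and unit vectors $\xi_k$.
\item Uniform $C^{1,\alpha}$ boundary estimates at unit scale give a limiting half-space problem with $v=y$ on $\{y_n=0\}$, $|v-y|\le C$, and $\nabla v(0)\xi=0$.
\item This forces $\xi=\pm e_n$ and $\pa_nv_n(0)=0$.
\end{itemize}

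You still have to prove $v_n\ge0$ in the limit. Since $v_n\ge y_n-C$, the function $\min\{v_n,0\}$ is a bounded supersolution vanishing on $\{y_n=0\}$ and on $\{y_n\ge C\}$. The maximum principle in the slab (for instance, by iterating the boundary De Giorgi decay) forces it to vanish. The strong maximum principle and Hopf's lemma, both valid for Lipschitz coefficients, then give $\pa_nv_n(0)>0$, a contradiction.

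Qualitative Hopf in the limit suffices, and compactness supplies the uniformity. The Harnack-chain quantification you were worried about is therefore unnecessary.
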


The estimate \eqref{eq:H1corrector} is formula~(2.8) in
\cite[Theorem~2.4]{KenigLinShen2013}. The estimate \eqref{eq:Phibound} is
formula~(2.5) in \cite[Lemma~2.2]{KenigLinShen2013}.
We also use the uniform Dirichlet corrector Lipschitz estimate
$\norm[L^\infty(\Om)]{\nabla\Phi_\eps}\leq C$ invoked in the proof of
\cite[Theorem~4.3]{KenigLinShen2013}.
In the scalar case, \cite[Lemma~5.1]{KenigLinShen2013}, together with the
uniform boundary Lipschitz bound for $\Phi_\eps$, gives the pointwise matrix
estimate \cite[formula~(5.5)]{KenigLinShen2013}; this is precisely
\eqref{eq:Phiinvertible}.  The thickness of this boundary layer is a fixed
structural multiple of $\eps$.  Under the present coefficient and boundary
regularity assumptions,
the first two corrector estimates also hold for systems; the scalar assumption
is essential for \eqref{eq:Phiinvertible}.

For $\mu>0$ and $0<\gamma\leq1$, define the
packet space
\begin{equation}\label{eq:packetspace}
 \mathscr P_\eps(\mu,\gamma)
 =\Ran\one_{[(\mu-\gamma)_+,\mu+\gamma]}(S_\eps),
 \qquad (a)_+:=\max\{a,0\}.
\end{equation}
\rev{Let $f\in\mathscr P_\eps(\mu,\gamma)$ with $\norm[L^2]{f}=1$, and set}
\begin{equation}\label{eq:quasimode}
              q=(\cL_\eps-\mu^2)f\rev{.}
\end{equation}
\rev{If $\mu\geq2\gamma$, the spectral theorem gives}
\begin{equation}\label{eq:packetbasic}
 \norm[L^2]{q}\leq3\gamma\mu,\qquad
 c\mu\leq\norm[L^2]{\nabla f}\leq C\mu,\qquad
 \norm[L^2]{\cL_\eps f}\leq C\mu^2.
\end{equation}
Indeed, on the spectral support of $f$ we have
$|\sigma^2-\mu^2|\leq\gamma(2\mu+\gamma)\leq3\gamma\mu$ and
$\mu/2\leq\sigma\leq3\mu/2$.  The last two assertions then follow from the
spectral theorem, the energy identity
$\ip{\cL_\eps f}{f}=\int_\Om A(x/\eps)\nabla f\cdot\overline{\nabla f}$,
and the uniform ellipticity.

\rev{We next record the upper boundary estimate for spectral packets.}

\begin{proposition}
\label[proposition]{prop:upperpacket}
There is $C>0$ such that, if $0<\gamma\leq1$, $\mu\geq1$, and
$f\in\mathscr P_\eps(\mu,\gamma)$, then
\begin{equation}\label{eq:upperpacket}
 \norm[L^2(\pa\Om)]{\nabla f}^2
 \leq C\mu^2(1+\eps\mu^2)\norm[L^2(\Om)]{f}^2
\end{equation}
whenever $\eps^2\mu^2\leq c$.  In the bounded range $0<\mu<1$, one instead
has $\norm[L^2(\pa\Om)]{\nabla f}^2\leq
C\norm[L^2(\Om)]{f}^2$.
\end{proposition}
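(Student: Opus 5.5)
The plan is to deduce \eqref{eq:upperpacket} from the boundary estimate for unit spectral packets in \cite[Theorem~4.3]{KenigLinShen2013}. If it is not available directly in this form, I would reprove it with a Rellich identity driven by the Dirichlet corrector. The packet $\mathscr P_\eps(\mu,\gamma)$ with $\gamma\le 1$ sits inside the unit packet $\Ran\one_{[\mu-1,\mu+1]}(S_\eps)$, so only the case $\gamma=1$ needs proof. That theorem asserts, in the scalar and systems cases alike, that for $f$ in such a packet with $\eps^2\mu^2\le c$,
\[
\int_{\pa\Om}|\nabla f|^2\dd\sigma\le C\mu^2(1+\eps\mu^2)\int_\Om|f|^2\dd x ,
\]
which is exactly the claim. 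Our first step is to check that its normalization agrees with ours: there the packet is usually written $\sum_{\sqrt{\lambda_k}\in[\mu,\mu+1)}$, which differs from our interval by covering it with at most three unit packets. We also need its hypotheses on $A$ and $\Om$ to match \eqref{eq:ellipticity} and the $C^3$ assumption.

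If a self-contained argument is preferred, I would run the corrector Rellich identity. Take a vector field $\beta\in C^2(\R^d;\R^d)$ with $\beta\cdot n\ge c>0$ on $\pa\Om$ and integrate $\operatorname{div}$ of the Rellich vector field against $f$. Using the Dirichlet condition, $\nabla f=(\pa_n f)n$ on $\pa\Om$, so the boundary term controls $\int_{\pa\Om}|\nabla f|^2$. The interior terms are of two kinds. Terms like $\int|\nabla f|^2|\nabla\beta|$ are $\le C\mu^2$ by \eqref{eq:packetbasic}. The dangerous term is $\eps^{-1}\int|\nabla A(x/\eps)||\nabla f|^2$, and to avoid it one replaces $\beta$ by a field built from the corrector, $\beta_j(\Phi_\eps)$, as in Kenig--Lin--Shen. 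This is where the Lipschitz bound $\norm[L^\infty]{\nabla\Phi_\eps}\le C$ and \eqref{eq:Phibound} enter.

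The remaining error is controlled by the $H^1$ corrector estimate \eqref{eq:H1corrector}, applied with $F=\cL_\eps f$, whose norm is $\le C\mu^2$ by \eqref{eq:packetbasic}. The error terms then take the form $\eps\norm{F}\cdot\norm{\nabla^2 u_0}$ plus layer contributions, which gives the factor $\eps\mu^2\cdot\mu^2$. The condition $\eps^2\mu^2\le c$ ensures that boundary-layer quantities of width $\eps$ are resolved relative to the wavelength $\mu^{-1}$, through local $W^{1,\infty}$ bounds of Avellaneda--Lin type on cylinders of size $\eps$. I expect the main obstacle to lie here: bounding $\nabla^2u_0$, or equivalently the smoothness of $u_0=\cL_0^{-1}F$, by $\mu^3$ while $u_0$ only solves a quasimode equation. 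To get $\norm[H^2]{u_0}\le C\mu^2$ and $\norm[H^{3/2}(\Om)]{\nabla u_0}\le C\mu^{5/2}$ I would use $C^3$ regularity of $\Om$ together with interpolation.

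For the bounded range $0<\mu<1$, the spectral support lies in $[0,2]$, so $\norm[L^2]{\cL_\eps f}\le C\norm[L^2]{f}$. The uniform boundary Lipschitz estimate for $\cL_\eps$ (Avellaneda--Lin, valid for Lipschitz periodic $A$ on $C^{1,\alpha}$ domains) gives $\norm[L^\infty]{\nabla f}\le C\norm[L^p]{\cL_\eps f}$ for some $p>d$. Raising the integrability from $L^2$ to $L^p$ takes a bootstrap with finitely many iterations of $\cL_\eps^{-1}$. This works because $\cL_\eps^{-1}$ commutes with the spectral projection, so $f=\cL_\eps^{-k}g$ with $\norm{g}\le C\norm{f}$ for any fixed $k$. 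Then $\norm[L^2(\pa\Om)]{\nabla f}^2\le C\norm[L^2]{f}^2$ follows.
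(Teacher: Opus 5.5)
Your main argument is the paper's: apply \cite[Theorem~4.3]{KenigLinShen2013} (formula (4.11) there) to unit frequency intervals, cover $[(\mu-\gamma)_+,\mu+\gamma]$ by at most three of them, and add the $L^2(\Om)$-orthogonal pieces with the finite-sum inequality. The paper makes two points explicit that you leave implicit.
\begin{itemize}
\item When $\mu$ is close to $1$, the cover contains frequencies below $1$, which lie outside the unit-packet theorem. That piece must be split off and bounded by the bounded-frequency estimate; your last paragraph supplies this bound.
\item Complex-valued packets are reduced to real-valued ones, because the spectral projections commute with complex conjugation.
\end{itemize}

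You genuinely differ from the paper in the bounded range. The paper uses the min-max principle to bound $\dim\Ran\one_{[0,2]}(S_\eps)$ by a structural constant. It then applies the single-eigenfunction bound \cite[Theorem~1.3]{KenigLinShen2013} to an orthonormal basis and finishes with Cauchy--Schwarz. You instead write $\cL_\eps f=\cL_\eps^{-k}(\cL_\eps^{k+1}f)$ and gain integrability through finitely many applications of $\cL_\eps^{-1}$, uniformly in $\eps$ (e.g.\ from the uniform $W^{1,q}$ theory or Green function bounds). You then use the Avellaneda--Lin estimate $\norm[L^\infty]{\nabla u}\leq C\norm[L^p]{\cL_\eps u}$ for $p>d$. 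This is correct, and it bounds $\nabla f$ in $L^\infty$ for every element of the low spectral subspace without counting eigenvalues. The cost is importing the uniform boundary Lipschitz theory directly; the paper's route is shorter given the Kenig--Lin--Shen eigenfunction bound.

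Your self-contained Rellich--corrector sketch is not needed once the citation is used. As written, it is too loose to replace the citation.
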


\begin{proof}
We first reduce to real-valued functions.  Because the coefficients are real,
the relevant spectral projections commute with complex conjugation; applying
the real-valued estimates to the real and imaginary parts and adding the
resulting squared inequalities yields the complex-valued versions.
In the notation of \cite[Theorem~4.3]{KenigLinShen2013}, a unit packet is
$[\sqrt\lambda,\sqrt\lambda+1)$, where $\lambda$ is an eigenvalue parameter
and $\lambda\simeq\mu^2$; the required estimate is formula~(4.11) there.  Since
$0<\gamma\leq1$, the interval in \eqref{eq:packetspace} is covered by at most
three disjoint unit frequency intervals (assigning every shared endpoint to
one interval).  A possible piece below frequency $1$ is kept separate and is
handled by the bounded-frequency estimate below.  Decompose $f$ into the
corresponding orthogonal pieces $f=\sum_{\ell=1}^m f_\ell$, where $m\leq3$.
The finite-sum inequality and the unit-packet estimate give
\[
 \norm[L^2(\pa\Om)]{\nabla f}^2
 \leq m\sum_{\ell=1}^m\norm[L^2(\pa\Om)]{\nabla f_\ell}^2
 \leq C\mu^2(1+\eps\mu^2)\sum_{\ell=1}^m\norm[L^2]{f_\ell}^2,
\]
which is \eqref{eq:upperpacket}.
For the bounded-frequency assertion, the min-max principle and uniform
ellipticity bound the dimension of the spectral subspace below frequency
$2$ by a structural constant.  Applying the single-eigenfunction bound
\cite[Theorem~1.3]{KenigLinShen2013} to an orthonormal basis gives a uniform
$O(1)$ bound.  In the alternative branch $\eps^2\lambda\geq1$, the restriction
$\lambda\leq4$ itself implies $\eps^{-1}\leq\sqrt\lambda\leq2$.
The stated estimate now follows from Cauchy--Schwarz.
\end{proof}

\rev{The proof of the lower bound also uses the following boundary-layer estimate.}

\begin{corollary}
\label[corollary]{cor:upperstrip}
Fix $R>0$.  There are $c>0$ and $C_R>0$, depending only on the structural
parameters and on $R$, such that, for every $0<\gamma\leq1$, $\mu>0$, and
$f\in\mathscr P_\eps(\mu,\gamma)$ satisfying $\eps\mu^2\leq c$, one has
\begin{equation}\label{eq:upperstrip}
 \int_{\Om_{R\eps}}|\nabla f|^2\dd x
 \leq C_R\eps(1+\mu)^2\norm[L^2(\Om)]{f}^2.
\end{equation}
\end{corollary}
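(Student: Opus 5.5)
The estimate will follow from the boundary trace bound of \cref{prop:upperpacket}, combined with an interior Lipschitz-type control of $\nabla f$ in the collar. The natural route is a layer-cake (coarea) bound: write
\[
 \int_{\Om_{R\eps}}|\nabla f|^2\dd x
 =\int_0^{R\eps}\int_{\{\dist(x,\pa\Om)=s\}}|\nabla f|^2\dd\sigma\dd s,
\]
and show that each parallel surface $\Gamma_s=\{\dist=s\}$, $s\le R\eps$, carries $\int_{\Gamma_s}|\nabla f|^2\lesssim(1+\mu)^2\norm[L^2]{f}^2$. Integrating in $s$ then gives $C_R\eps(1+\mu)^2\norm{f}^2$. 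The factor $(1+\eps\mu^2)$ from \eqref{eq:upperpacket} is bounded once $\eps\mu^2\le c$. The low-frequency branch $\mu<1$ is covered by the bounded-frequency part of \cref{prop:upperpacket}, which produces the $(1+\mu)^2$ instead of $\mu^2$.

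To get the bound on $\Gamma_s$, I would not redo the Kenig--Lin--Shen argument. Instead I would use the uniform boundary Lipschitz estimate for $\cL_\eps$ with right-hand side, applied to $\cL_\eps f=F:=\cL_\eps f$. Concretely, I would cover $\Om_{R\eps}$ by boundary cylinders $D_r(x_0)$ of a scale $r$ with $\eps\le r$ and $r\mu\lesssim1$; take $r=\min(1,\mu^{-1})$, possibly enlarged to $\max(\eps,\cdot)$. On each cylinder the uniform boundary Lipschitz estimate (Avellaneda--Lin, as in \cite{Shen2018}) gives
\[
 \norm[L^\infty(D_{r}(x_0))]{\nabla f}
 \le C\Big(\fint_{D_{2r}}|\nabla f|^2\Big)^{1/2}
 +Cr\Big(\fint_{D_{2r}}|F|^p\Big)^{1/p}
\]
for some $p>d$. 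Squaring and integrating over the $R\eps$-collar part of $D_r$, which has measure $\simeq\eps r^{d-1}$, yields a local bound of $\eps r^{-1}\int_{D_{2r}}|\nabla f|^2$ plus a forcing term. Summing over a cover with bounded overlap, and using $\norm{\nabla f}\le C\mu$ and $r^{-1}\simeq 1+\mu$ from \eqref{eq:packetbasic}, the gradient part gives $C\eps(1+\mu)^2\norm{f}^2$.

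The main obstacle is the forcing term. Here $F=\mu^2 f+q$ is controlled only in $L^2$ (by $C\mu^2$), while the Lipschitz estimate needs $L^p$ with $p>d$. I would try to avoid this in one of two ways.

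The first option is a bootstrap in $L^p$ norms of $f$ at frequency scale. Interior/boundary $W^{1,p}$ estimates on cylinders of size $r\simeq\mu^{-1}$ give $\norm[L^\infty]{f}\lesssim\mu^{d/2}\norm{f}$ locally, and only in the scaled, averaged form. The forcing then contributes $r\mu^2\,|f|\lesssim\mu\,(\fint|f|^2)^{1/2}$ after rescaling. Its squared integral over the collar is $\lesssim\eps\mu^2\norm{f}^2$, which is of the required form.

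The second option, if that iteration is awkward, is to avoid pointwise bounds entirely. I would use \eqref{eq:H1corrector} with $F=\cL_\eps f$ to write $f=u_0+(\Phi_\eps-x)\cdot\nabla u_0+w$, where $\norm[H^1]{w}\le C\eps\mu^2$. The collar integral of $|\nabla w|^2$ is then at most $C\eps^2\mu^4\le C\eps\mu^2$ by $\eps\mu^2\le c$. For the main part, $\nabla\Phi_\eps\nabla u_0$ is bounded by $C|\nabla u_0|$ via the corrector Lipschitz bound; the remaining term $(\Phi_\eps-x)\nabla^2u_0$ is $O(\eps|\nabla^2u_0|)$ by \eqref{eq:Phibound}. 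For the smooth homogenized solution $u_0$, the standard trace/layer inequality $\int_{\Om_{R\eps}}|g|^2\le C\eps\norm{g}\norm[H^1]{g}$ applies with $g=\nabla u_0$. Combined with $H^2$ and $H^3$ regularity on the $C^3$ domain, this gives $\lesssim\eps\mu\cdot\mu^2$. That is too big, so I would instead use the $H^1$-$H^2$ interpolation $\eps\norm[H^1]{u_0}\norm[H^2]{u_0}\lesssim\eps\mu^3$. Since this gives the same loss, I expect the first, Lipschitz route to be the one that actually works, and I would pursue it first.
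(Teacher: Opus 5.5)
There is a genuine gap. The Lipschitz route you plan to pursue loses a full factor of $\mu$, which is exactly the loss you detected in your second option.

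\textbf{Where the factor is lost.} Summing the local bounds $\eps r^{-1}\int_{D_{2r}}|\nabla f|^2$ with $r^{-1}\simeq1+\mu$ and $\norm[L^2]{\nabla f}^2\leq C\mu^2$ gives $C\eps(1+\mu)\mu^2$, not $C\eps(1+\mu)^2$. The bootstrapped forcing term fails the same way: one cylinder contributes $\eps r^{d-1}\cdot\mu^2r^{-d}\int_{D_{2r}}|f|^2$, and the sum is $\eps\mu^3\int_{\Om_{Cr}}|f|^2$. More generally, if only global norms of $f$ enter, the gradient and forcing contributions at scale $r\geq\eps$ have sizes $\eps r^{-1}\mu^2$ and $\eps r\mu^4$. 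So no choice of $r$ beats $\eps\mu^3$.

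To do better you must know that a collar of width $\sim\mu^{-1}$ carries only a fraction $\sim\mu^{-1}$ of the packet energy. That is a non-concentration statement of the same type as the corollary, and local regularity combined with global norms cannot supply it. The layer-cake reduction does not rescue this. \cref{prop:upperpacket} controls $\nabla f$ only on $\pa\Om$, and your Lipschitz argument never uses it anyway. Transporting the trace bound to $\Gamma_s$ along normals costs $\eps^2\int_{\Om_{R\eps}}|\nabla^2f|^2$, which for the oscillating $f$ is comparable to the collar energy itself.

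Nor is the loss harmless. In \cref{thm:packet} the bad-cylinder mass $CK_{\mathrm{loc}}^{-1}\int_{\Om_{R_0\eps}}|\nabla f|^2$ must be absorbed into $c\eps\mu^2$ with $K_{\mathrm{loc}}$ independent of $\mu$. A bound of order $\eps\mu^3$ breaks this.

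\textbf{The missing ingredient is a global Rellich identity.} The paper does not reprove the collar bound. Above frequency $1$ it imports it from the proof of (4.16)--(4.17) in \cite[Theorem~4.3]{KenigLinShen2013}, applied to at most three unit packets. Below frequency $1$ it uses the corrector and an $H^2$ collar bound, where your loss is harmless.

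Your second option can be repaired along these lines. Normalize $\norm[L^2]{f}=1$, take $f$ real, let $v$ solve $\cL_0v=\cL_\eps f$, and take a $C^1$ field $h$ with $h\cdot n\geq c>0$ on $\pa\Om$. The Rellich identity bounds $\int_{\pa\Om}|\nabla v|^2$ by $C\norm[L^2]{\nabla v}^2+C\bigl|\int_\Om(\mu^2f+q)\,h\cdot\nabla v\bigr|$. Do not bound $\mu^2\int_\Om f\,h\cdot\nabla v$ crudely by $C\mu^3$. Instead write $v=f+(v-f)$ and use $\int_\Om f\,h\cdot\nabla f=-\frac12\int_\Om(\operatorname{div}h)f^2$. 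After integrating by parts, the remainder is $O(\eps\mu^4)$ because $v-f=-e_\eps-(\Phi_\eps-x)\cdot\nabla v$, exactly as in the proof of \cref{lem:layermass}. Hence $\int_{\pa\Om}|\nabla v|^2\leq C\mu^2(1+\eps\mu^2)$.

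Then replace your interpolation inequality by the normal fundamental-theorem bound $\int_{\Om_{R\eps}}|\nabla v|^2\leq C_R\eps\int_{\pa\Om}|\nabla v|^2+C_R\eps^2\norm[L^2]{\nabla^2v}^2$. Since $\eps^2\mu^4\leq c\,\eps\mu^2$, combining this with \eqref{eq:correctorpacket} and $\norm[L^\infty]{\nabla\Phi_\eps}\leq C$ gives $\int_{\Om_{R\eps}}|\nabla f|^2\leq C_R\eps\mu^2$.
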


\begin{proof}
First, suppose that the packet is supported above the frequency $1$.  For a single
unit packet,
the proof of estimates (4.16)--(4.17) in
\cite[Theorem~4.3]{KenigLinShen2013}, with the collar thickness replaced by
$R\eps$, gives
\[
 \frac1\eps\int_{\Om_{R\eps}}|\nabla f|^2\dd x
 \leq C_R(1+\mu)^2\bigl(1+\eps(1+\mu)^2\bigr)
       \norm[L^2(\Om)]{f}^2.
\]
The interval defining $\mathscr P_\eps(\mu,\gamma)$ is the union of at most
three such unit packets.  Decomposing $f$ into the corresponding orthogonal
pieces and using the finite-sum inequality yields \eqref{eq:upperstrip}, since
$\eps\mu^2\leq c$ keeps $1+\eps(1+\mu)^2$ uniformly bounded.

It remains to treat a possible component $f_<$ supported below frequency $1$.
Let $v_<\in H^1_0(\Om)$ solve $\cL_0v_<=\cL_\eps f_<$.  Spectral calculus and
constant-coefficient regularity give
\begin{equation}\label{eq:lowfrequencyelliptic}
 \norm[L^2(\Om)]{\cL_\eps f_<}+\norm[H^2(\Om)]{v_<}
 \leq C\norm[L^2(\Om)]{f_<}.
\end{equation}
By the corrector estimate \eqref{eq:H1corrector}, the Lipschitz bound for
$\Phi_\eps$, and the normal-coordinate thin-collar estimate,
\[
 \int_{\Om_{R\eps}}|\nabla f_<|^2
 \leq C\eps^2\norm[L^2]{\cL_\eps f_<}^2
       +C\int_{\Om_{R\eps}}|\nabla v_<|^2
 \leq C_R\eps\norm[L^2]{f_<}^2,
\]
where the last inequality follows from \eqref{eq:lowfrequencyelliptic} and
the standard $H^2$ collar bound for $v_<$.  Combining this component with 
high-frequency pieces proves the corollary, including packets that straddle
frequency $1$.
\end{proof}

\section{A lower bound for spectral packets}\label{sec:lowerpacket}

\rev{We first prove a local boundary estimate for equations with a small
inhomogeneous term.}  For a $C^2$ function $\psi:\R^{d-1}\to\R$, set
\begin{align*}
 Z(\psi,r)&=\{(x',x_d):|x'|<r,\ \psi(x')<x_d<\psi(x')+r\},\\
 I(\psi,r)&=\{(x',\psi(x')):|x'|<r\}.
\end{align*}
\rev{Define the half-cylinder}
\[
 Q_r=\{(y',y_d):|y'|<r,\ 0<y_d<r\}
\]
and the flattening map
\begin{equation}\label{eq:flatteningmap}
 \Psi_\psi(y',y_d)=(y',y_d+\psi(y')),
 \qquad J_\psi=D\Psi_\psi.
\end{equation}
Notice that $\det J_\psi=1$.  Thus, if $\widetilde w=w\circ\Psi_\psi$,
the principal coefficient in the flattened equation is
\begin{equation}\label{eq:flattenedcoefficient}
 \widetilde B=J_\psi^{-1}(B\circ\Psi_\psi)J_\psi^{-T}.
\end{equation}

\rev{The following lemma bounds the boundary gradient from below in terms of
the interior gradient.  It allows a small inhomogeneous term, as required
for spectral packets.}

\begin{lemma}
\label[lemma]{lem:microcauchy}
Fix $\kappa\in(0,1)$ and $c_1,C_1>0$. There exists $\eta,c>0$, depending
only on $d,\kappa,c_1,C_1$, with the following property.  Suppose that $B$ is
real symmetric and satisfies
$\kappa|\xi|^2\leq B(x)\xi\cdot\xi\leq\kappa^{-1}|\xi|^2$, with
$\norm[L^\infty]{\nabla B}\leq C_1$.  Suppose also that
$\psi(0)=0$, $\nabla\psi(0)=0$, and
$\norm[L^\infty]{\nabla^2\psi}\leq C_1$.  Let
$w\in H^1(Z(\psi,2);\C)$, $q\in L^2(Z(\psi,2);\C)$, and $\beta\in\R$ satisfy
\begin{equation}\label{eq:localpde}
 -\operatorname{div}(B\nabla w)=\beta w+q\quad\text{in }Z(\psi,2),
 \qquad w=0\quad\text{on }I(\psi,2),
\end{equation}
with $|\beta|\leq C_1$, and assume that
\begin{align}
 \int_{Z(\psi,1)}|\nabla w|^2
     &\geq c_1\int_{Z(\psi,2)}|\nabla w|^2,\label{eq:massratio}\\
 \norm[L^2(Z(\psi,2))]{q}
     &\leq\eta\norm[L^2(Z(\psi,2))]{\nabla w},\label{eq:qsmall}
\end{align}
then
\begin{equation}\label{eq:localcauchy}
 \int_{I(\psi,1)}|\nabla w|^2\dd\sigma
 \geq c\int_{Z(\psi,2)}|\nabla w|^2\dd x.
\end{equation}
The boundary gradient trace is well defined by the local boundary $H^2$
regularity for Lipschitz coefficients and $L^2$ right-hand sides.
\end{lemma}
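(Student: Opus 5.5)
We argue by contradiction and compactness, reducing \eqref{eq:localcauchy} to unique continuation across the flat boundary. First flatten by \eqref{eq:flatteningmap}: set $\widetilde w=w\circ\Psi_\psi$ and $\widetilde q=q\circ\Psi_\psi$ on $Q_2$. Since $\det J_\psi=1$, the flattened equation reads $-\operatorname{div}(\widetilde B\nabla\widetilde w)=\beta\widetilde w+\widetilde q$ in $Q_2$ with $\widetilde w=0$ on $\{y_d=0\}$, where $\widetilde B$ is given by \eqref{eq:flattenedcoefficient}. The hypotheses on $\psi$ give $|\nabla\psi|\leq 2C_1$ on $|y'|<2$, so $\widetilde B$ is uniformly elliptic and Lipschitz with constants depending only on $d,\kappa,C_1$. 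Volumes are preserved, and boundary measures and gradient norms change by comparable factors. It therefore suffices to prove the statement on $Q_2$ with the bottom face $\{|y'|<1,\,y_d=0\}$, where the gradient trace reduces to $|\pa_d\widetilde w|$ up to constants.

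Suppose the lemma fails for every $\eta,c$. Then there are sequences $B_k,\psi_k,\beta_k,w_k,q_k$ satisfying the hypotheses with $\eta_k\to0$. Normalize $\norm[L^2(Q_2)]{\nabla\widetilde w_k}=1$. The failure gives $\norm[L^2(\{|y'|<1,\,y_d=0\})]{\nabla\widetilde w_k}\to0$, and we also have $\norm[L^2]{\widetilde q_k}\leq\eta_k\to0$.

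By Arzel\`a--Ascoli, along a subsequence $\widetilde B_k\to B_\infty$ uniformly, with $B_\infty$ still Lipschitz and elliptic, and $\beta_k\to\beta_\infty$. Poincar\'e's inequality (from the zero trace on the bottom) bounds $\widetilde w_k$ in $H^1(Q_2)$. Caccioppoli at the boundary together with local boundary $H^2$ regularity gives uniform $H^2$ bounds on $Q_{3/2}$, because the right-hand side $\beta_k\widetilde w_k+\widetilde q_k$ is bounded in $L^2$. Hence $\widetilde w_k\to w_\infty$ strongly in $H^1(Q_{3/2})$, and the traces of $\nabla\widetilde w_k$ on the bottom face converge in $L^2$. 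The limit solves $-\operatorname{div}(B_\infty\nabla w_\infty)=\beta_\infty w_\infty$ with $w_\infty=0$ and $\nabla w_\infty=0$ on $\{|y'|<1,\,y_d=0\}$. By the mass ratio \eqref{eq:massratio} and strong convergence on $Q_1$, we get $\int_{Q_1}|\nabla w_\infty|^2\geq c_1$.

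To finish, extend $w_\infty$ by zero across $\{y_d=0\}$ for $|y'|<1$. Vanishing Cauchy data makes this extension an $H^1$ weak solution of an elliptic equation in a full neighborhood, with $B_\infty$ extended in a Lipschitz way; the flux $B_\infty\nabla w_\infty\cdot e_d$ vanishes on the interface, so no jump term appears. The extension vanishes on an open set. Unique continuation for scalar elliptic operators with Lipschitz principal coefficients (Garofalo--Lin, or Carleman estimates) then gives $w_\infty\equiv0$ on the connected region containing $Q_1$. This contradicts $\int_{Q_1}|\nabla w_\infty|^2\geq c_1$.

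The main obstacle is the compactness step. Strong convergence up to the boundary of the gradient traces requires uniform boundary $H^2$ estimates, and these must be uniform in the Lipschitz constants of $\widetilde B_k$, which degrade only through $C_1$. The mass ratio hypothesis is essential here: strong $H^1$ convergence holds only on $Q_{3/2}$, not on all of $Q_2$, and without the ratio all the normalized gradient mass could escape to the lateral or top part of $Q_2$. This is exactly why the lemma assumes \eqref{eq:massratio}.
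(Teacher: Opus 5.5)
Your proposal is correct and follows essentially the same route as the paper. It flattens via $\Psi_\psi$, argues by contradiction with normalized sequences, and uses Poincar\'e and uniform boundary $H^2$ bounds to get strong $H^1$ convergence and $L^2$ convergence of the gradient traces. It then obtains zero Cauchy data in the limit, extends by zero with a reflected Lipschitz coefficient, and contradicts the mass ratio via unique continuation. Two small points: after flattening, the lower bound on $\int_{Q_1}|\nabla w_\infty|^2$ is a constant depending on $c_1$ and $C_1$, not $c_1$ itself; and for complex-valued $w$ you should say that the real and imaginary parts decouple in the limit (as the paper notes, since $B$ and $\beta$ are real), so scalar unique continuation applies to each.
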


\begin{proof}
\rev{The lemma extends \cite[Lemma~5.3]{KenigLinShen2013} to equations with a
small inhomogeneous term.  For complex-valued functions, we regard
$(\Re w,\Im w)$ as a two-component real system with diagonal principal part.}

Suppose that the conclusion is false.  Then there are admissible
$B_n,\psi_n,\beta_n,w_n,q_n$ for which \eqref{eq:localcauchy} fails with
$c=1/n$ and \eqref{eq:qsmall} holds with $\eta=1/n$. By homogeneity, we
normalize
\begin{equation}\label{eq:micronormalization}
                     \int_{Z(\psi_n,2)}|\nabla w_n|^2=1.
\end{equation}
Thus $\norm[L^2]{q_n}\to0$ and
\begin{equation}\label{eq:microboundaryzero}
             \int_{I(\psi_n,1)}|\nabla w_n|^2\dd\sigma\longrightarrow0.
\end{equation}

The zero trace on the bottom face yields the uniform Poincar\'e's inequality
\begin{equation}\label{eq:uniformpoincare}
 \norm[L^2(Z(\psi_n,2))]{w_n}
 \leq C\norm[L^2(Z(\psi_n,2))]{\nabla w_n}.
\end{equation}
Indeed, this follows first on $Q_2$ by integrating along vertical lines and
then on $Z(\psi_n,2)$ through \eqref{eq:flatteningmap}; the constant is
controlled by the uniform $C^{1,1}$ graph bound.

Set $\widetilde w_n=w_n\circ\Psi_{\psi_n}$ and define
$\widetilde B_n$ by \eqref{eq:flattenedcoefficient}.  Since the maps and their
inverses have uniformly bounded $C^{1,1}$ norms, the matrices
$\widetilde B_n$ are uniformly elliptic and uniformly Lipschitz on $Q_2$.
The flattened equation is exactly
\begin{equation}\label{eq:flattenedpde}
 -\operatorname{div}_y(\widetilde B_n\nabla_y\widetilde w_n)
   =\beta_n\widetilde w_n+\widetilde q_n,
 \qquad \widetilde q_n=q_n\circ\Psi_{\psi_n},
\end{equation}
because $\det J_{\psi_n}=1$.  Its bottom trace is zero.

The bounds on $\psi_n$ and the normalization at the origin yield, after a
subsequence, $\psi_n\to\psi$ in $C^1(\overline{B'_2})$ and weak-* in
$W^{2,\infty}$.  Arzel\`a--Ascoli and the boundedness of $\beta_n$ likewise give
\begin{equation}\label{eq:microcompactness}
 \widetilde B_n\longrightarrow\widetilde B\quad\text{uniformly on }Q_2,
 \qquad \beta_n\longrightarrow\beta,
 \qquad \widetilde w_n\rightharpoonup\widetilde w\quad\text{in }H^1(Q_2).
\end{equation}
Here and in the following, we pass to further subsequences without relabeling.  The local boundary $H^2$ estimate for divergence-form equations with Lipschitz
coefficients and $L^2$ forcing gives
\begin{equation}\label{eq:microh2}
 \norm[H^2(Q_{3/2})]{\widetilde w_n}
 \leq C\big(\norm[H^1(Q_{7/4})]{\widetilde w_n}
            +\norm[L^2(Q_{7/4})]{\widetilde q_n}\big)\leq C;
\end{equation}
See, for example, \cite[Section~6.3.2, Theorem~4]{Evans2010}, applied after
localization on the flat boundary; the difference-quotient proof uses only the
uniform $W^{1,\infty}$ bound on the coefficients.  After passing to a further
subsequence, we obtain a weak convergence in $H^2(Q_{3/2})$.  Rellich
compactness then gives
\begin{equation}\label{eq:microstrongh1}
             \widetilde w_n\longrightarrow\widetilde w
             \quad\text{strongly in }H^1(Q_{4/3}).
\end{equation}
Changes of variables in \eqref{eq:massratio} and
\eqref{eq:micronormalization}, together with the uniform equivalence of the
flattened and physical gradients, therefore, imply
\begin{equation}\label{eq:micrononzero}
                  \int_{Q_1}|\nabla\widetilde w|^2\dd y\geq c_2>0,
\end{equation}
where $c_2$ depends only on the constants in the statement.

We may pass to the limit in \eqref{eq:flattenedpde} to obtain
\begin{equation}\label{eq:microlimitequation}
 -\operatorname{div}(\widetilde B\nabla\widetilde w)=\beta\widetilde w
 \quad\text{in }Q_{4/3},
 \qquad \widetilde w=0\quad\text{on }\{y_d=0\}.
\end{equation}
The trace operator applied to the first derivatives maps $H^2(Q_{3/2})$
boundedly into $H^{1/2}(\{|y'|<4/3,y_d=0\})$, and this space embeds compactly
into $L^2$ on the smaller face. Thus, the weak $H^2$ convergence, the strong
$H^1$ convergence on $Q_{4/3}$, the convergence of
$J_{\psi_n}$, and \eqref{eq:microboundaryzero} imply
\begin{equation}\label{eq:microzerocauchydata}
 \nabla\widetilde w=0\quad\text{on }\{|y'|<1,y_d=0\};
\end{equation}
in particular, the conormal trace
$\widetilde B\nabla\widetilde w\cdot e_d$ vanishes there.

The uniform Lipschitz bounds and the uniform convergence in
\eqref{eq:microcompactness} show that $\widetilde B$ is itself Lipschitz.
Choose a smaller ball that meets the flat face and extends $\widetilde B$ evenly
across $y_d=0$; the extension remains elliptic and Lipschitz.  Extend
$\widetilde w$ by zero to the lower half.  To verify the equation across the
interface, integrate \eqref{eq:microlimitequation} by parts against an
arbitrary compactly supported test function.  The only interface term is the
conormal trace, which vanishes by \eqref{eq:microzerocauchydata}; hence the
zero extension is a weak solution of
$-\operatorname{div}(\widetilde B\nabla\widetilde w)=\beta\widetilde w$ across
the face.  It vanishes in the open lower half-ball. The unique continuation for
elliptic equations with Lipschitz principal coefficients
\cite{AronszajnKrzywickiSzarski1962} now forces it to vanish in the whole
ball, and the interior unique continuation propagates the vanishing through
$Q_1$. This contradicts \eqref{eq:micrononzero}.  Every estimate used above
is uniform over the admissible class, which proves the result.
\end{proof}

Fix a uniform $C^2$ boundary atlas.  After a rigid motion sending
$P\in\pa\Om$ to the origin and the inward normal to $e_d$, write the
corresponding patch as $Z(\psi_P,r)$ and denote its image in the original
coordinates by $D(P,r)$; set $\Delta(P,r)=D(P,r)\cap\pa\Om$.

After translating $P$ to the origin, rotating the inward normal to $e_d$, and
rescaling by $\eps$, the oscillatory coefficient and the boundary graph satisfy
the uniform hypotheses of
\cref{lem:microcauchy}.  Rescaling its conclusion gives the form used on
physical boundary patches.

\begin{corollary}
\label[corollary]{cor:rescaledcauchy}
Fix $c_1>0$ and $L>0$.  There exist $\eps_1,\eta,c>0$, depending only on
$d,\kappa,M,\Om,c_1$, and $L$, such that the following holds.  Let
$0<\eps<\eps_1$, $P\in\pa\Om$, $\mu\geq0$, and suppose
$f\in H^1(D(P,2\eps);\C)$ and $q\in L^2(D(P,2\eps);\C)$ satisfy
\begin{equation}\label{eq:rescaledpde}
 (\cL_\eps-\mu^2)f=q\quad\text{in }D(P,2\eps),\qquad
 f=0\quad\text{on }\Delta(P,2\eps),
\end{equation}
together with
\begin{align}
 \int_{D(P,\eps)}|\nabla f|^2
   &\geq c_1\int_{D(P,2\eps)}|\nabla f|^2,\label{eq:rescaledmass}\\
 \eps^2\int_{D(P,2\eps)}|q|^2
   &\leq\eta^2\int_{D(P,2\eps)}|\nabla f|^2,
       \label{eq:rescaledforcing}\\
 \eps^2\mu^2&\leq L.\label{eq:rescaledzerothorder}
\end{align}
Then
\begin{equation}\label{eq:rescaledcauchy}
 \int_{\Delta(P,\eps)}|\nabla f|^2\dd\sigma
 \geq \frac c\eps\int_{D(P,2\eps)}|\nabla f|^2\dd x.
\end{equation}
\end{corollary}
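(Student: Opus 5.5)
The plan is to rescale by $\eps$ and reduce \cref{cor:rescaledcauchy} directly to \cref{lem:microcauchy}. Since the patches come from a uniform $C^2$ atlas, there is an $\eps_1>0$ such that for $0<\eps<\eps_1$ the rescaled patch of $\Om$ near $P$ is a graph over $|x'|<2$ with small $C^{1,1}$ bounds.

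\textbf{Setup.} After the rigid motion sending $P$ to $0$ and the inward normal to $e_d$, the boundary is locally $x_d=\psi_P(x')$ with $\psi_P(0)=0$, $\nabla\psi_P(0)=0$ and $\|\nabla^2\psi_P\|_\infty\le C_\Om$. Define
\[
\psi(y')=\eps^{-1}\psi_P(\eps y'),\quad w(y)=f(\eps y),\quad B(y)=R\,A\bigl((R^{T}\eps y+P)/\eps\bigr)R^{T},
\]
where $R$ is the rotation. In these variables $D(P,r\eps)$ corresponds to $Z(\psi,r)$ and $\Delta(P,r\eps)$ to $I(\psi,r)$.

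\textbf{Checking the hypotheses of \cref{lem:microcauchy}.}
\begin{itemize}
\item The graph satisfies $\psi(0)=0$, $\nabla\psi(0)=0$ and $\|\nabla^2\psi\|_\infty=\eps\|\nabla^2\psi_P\|_\infty\le C_\Om$.
\item $B$ is real symmetric with the same ellipticity constant $\kappa$ (rotation preserves it). It is Lipschitz in $y$ with constant $M$, because $A(\cdot/\eps)$ composed with $y\mapsto\eps y$ has unit-scale oscillation and the translation and rotation do not change this.
\item The equation $(\cL_\eps-\mu^2)f=q$ becomes $-\operatorname{div}_y(B\nabla_y w)=\beta w+\widetilde q$, with $\beta=\eps^2\mu^2$ and $\widetilde q(y)=\eps^2 q(\eps y)$. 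By \eqref{eq:rescaledzerothorder}, $|\beta|\le L$.
\end{itemize}
So I take $C_1=\max\{C_\Om,M,L\}$.

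\textbf{Scaling of the quantities.} In dimension $d$,
\[
\int_{Z(\psi,r)}|\nabla_y w|^2\,dy=\eps^{2-d}\int_{D(P,r\eps)}|\nabla f|^2\,dx,\qquad \int_{Z(\psi,2)}|\widetilde q|^2\,dy=\eps^{4-d}\int_{D(P,2\eps)}|q|^2\,dx .
\]
\begin{itemize}
\item The mass ratio \eqref{eq:rescaledmass} is scale invariant, so it gives \eqref{eq:massratio} with the same $c_1$.
\item \eqref{eq:rescaledforcing}, multiplied by $\eps^{2-d}$, gives exactly \eqref{eq:qsmall} with the same $\eta$.
\item The surface measure scales as $\eps^{d-1}$ and the gradient trace picks up a factor $\eps$, so $\int_{I(\psi,1)}|\nabla w|^2\,d\sigma_y=\eps^{3-d}\int_{\Delta(P,\eps)}|\nabla f|^2\,d\sigma$.
\end{itemize}
Feeding these into \eqref{eq:localcauchy} gives
\[
\eps^{3-d}\int_{\Delta(P,\eps)}|\nabla f|^2\,d\sigma\;\ge\; c\,\eps^{2-d}\int_{D(P,2\eps)}|\nabla f|^2\,dx,
\]
which is \eqref{eq:rescaledcauchy}. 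The constants $\eta$ and $c$ are those of \cref{lem:microcauchy} for the data $(\kappa,c_1,C_1)$, hence they depend only on $d,\kappa,M,\Om,c_1,L$.

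\textbf{Main obstacle.} The only real care is geometric: making sure that $D(P,2\eps)$ lies inside a single atlas chart, and that the rescaled graph is defined on all of $|y'|<2$ with the stated bounds. This is what fixes $\eps_1$. A related point is that $D(P,r)$ is defined as the image of $Z(\psi_P,r)$, so these cylinders match exactly under scaling. The remaining item, the trace scaling with complex values, is routine.
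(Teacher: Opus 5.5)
Your proposal is correct and follows the paper's proof essentially verbatim. You rescale by $\eps$ around $P$ with the chart rotation, take $\beta=\eps^2\mu^2\le L$ and $\widetilde q=\eps^2 q(\cdot)$, note that the mass ratio is scale-invariant and that the forcing ratio reproduces \eqref{eq:qsmall}, and recover the factor $\eps^{-1}$ from the $\eps^{3-d}$ versus $\eps^{2-d}$ scaling. Your explicit remark that $\eps_1$ is fixed so that $D(P,2\eps)$ lies in a single atlas chart is a detail the paper leaves implicit.
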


\begin{proof}
Let $O_P\in O(d)$ be the orthogonal matrix provided by the boundary chart.
Set $x=P+\eps O_Pz$ and set $w(z)=f(P+\eps O_Pz)$.  The rescaled equation is
\begin{equation}\label{eq:explicitrescaling}
 -\operatorname{div}_z\!\left(B_{P,\eps}(z)\nabla_z w\right)
 =\eps^2\mu^2w+\eps^2q(P+\eps O_Pz),
\end{equation}
where
\[
 B_{P,\eps}(z)=O_P^{T}A(P/\eps+O_Pz)O_P,
 \qquad
 \psi_{P,\eps}(z')=\eps^{-1}\psi_P(\eps z').
\]
The matrices $B_{P,\eps}$ remain uniformly elliptic, and their Lipschitz
seminorms are bounded by a constant that depends only on $M$.  The rescaled
graphs have a uniform $C^2$ bound because
\[
 \norm[L^\infty]{\nabla^2\psi_{P,\eps}}
 \leq\eps\norm[L^\infty]{\nabla^2\psi_P}.
\]
Taking $\beta=\eps^2\mu^2$, the zeroth-order coefficient in
\eqref{eq:explicitrescaling} is bounded by $L$ in view of
\eqref{eq:rescaledzerothorder}.  The mass-ratio condition is
unchanged by scaling, since both gradient integrals acquire the factor
$\eps^{d-2}$.  For the forcing term,
\[
 \frac{\norm[L^2]{\eps^2q(P+\eps O_P\,\cdot)}^2}
      {\norm[L^2]{\nabla_z w}^2}
 =\frac{\eps^4\eps^{-d}\int|q|^2}
        {\eps^2\eps^{-d}\int|\nabla_x f|^2}
 =\frac{\eps^2\int|q|^2}{\int|\nabla_x f|^2}
 \leq\eta^2.
\]
Thus, all hypotheses of \cref{lem:microcauchy} hold with uniform constants,
and its conclusion applies on the unit-scale patch.  Finally, surface and
volume measures scale as $\eps^{d-1}$ and $\eps^d$, respectively.  Their
ratio produces exactly the factor $\eps^{-1}$ in
\eqref{eq:rescaledcauchy}.
\end{proof}

\rev{The following lemma estimates the packet energy in a microscopic boundary
layer.  Its lower bound will be used in the rescaled Cauchy estimate.}

\begin{lemma}
\label[lemma]{lem:layermass}
There exist $\eps_0,\delta_0,\gamma_0,\rho,c,C>0$ such that, if
$0<\eps<\eps_0$, $0<\gamma\leq\gamma_0$, $\eps\mu^2\leq\delta_0$, and
$f\in\mathscr P_\eps(\mu,\gamma)$, then
\begin{equation}\label{eq:layertwosided}
 c\eps\mu^2\norm[L^2(\Om)]{f}^2
 \leq\int_{\Om_{\rho\eps}}|\nabla f|^2\dd x
 \leq C\eps\mu^2\norm[L^2(\Om)]{f}^2.
\end{equation}
\end{lemma}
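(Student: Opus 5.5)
The upper bound in \eqref{eq:layertwosided} comes from \cref{cor:upperstrip} with $R=\rho$. It gives $\int_{\Om_{\rho\eps}}|\nabla f|^2\leq C_\rho\eps(1+\mu)^2\norm[L^2]{f}^2$. To replace $(1+\mu)^2$ by $C\mu^2$, I note that $\lambda_{\eps,1}\geq c_*>0$ uniformly, by Poincar\'e and ellipticity. So if $\gamma_0\leq\sqrt{c_*}/2$, a nonzero packet forces $\mu\geq\sqrt{c_*}/2$. All the work is in the lower bound, which I would prove by the corrector--Rellich mechanism announced in \eqref{eq:packetmechanismintro}.

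\emph{Step 1 (homogenized comparison).} Normalize $\norm[L^2]{f}=1$ and set $F=\cL_\eps f=\mu^2f+q$, so that $\norm[L^2]{F}\leq C\mu^2$ by \eqref{eq:packetbasic}. Let $u_0\in H^1_0$ solve $\cL_0u_0=F$. Constant-coefficient $H^2$ regularity on the $C^3$ domain gives $\norm[H^2]{u_0}\leq C\mu^2$ and $\norm[H^1]{u_0}\leq C\mu$. By \eqref{eq:H1corrector}, $w=f-u_0-(\Phi_\eps-x)\cdot\nabla u_0$ satisfies $\norm[H^1]{w}\leq C\eps\mu^2$. In the collar $\Om_{\rho\eps}$ with $\rho\leq c_0$,
\[
\nabla f=(\nabla\Phi_\eps)\nabla u_0+(\Phi_\eps-x)\cdot\nabla^2u_0+\nabla w .
\]
The pointwise bound \eqref{eq:Phiinvertible} and \eqref{eq:Phibound} then give
\[
\int_{\Om_{\rho\eps}}|\nabla f|^2\geq \tfrac{c_0^2}{2}\int_{\Om_{\rho\eps}}|\nabla u_0|^2-C\eps^2\mu^4 .
\]
Since $\eps^2\mu^4=(\eps\mu^2)\cdot\eps\mu^2\leq\delta_0\,\eps\mu^2$, this error is absorbable once $\delta_0$ is small.

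\emph{Step 2 (collar versus boundary for $u_0$).} Using normal coordinates near $\pa\Om$ and the fundamental theorem of calculus along normals, I would show
\[
\Bigl|\int_{\Om_s}|\nabla u_0|^2-s\int_{\pa\Om}|\nabla u_0|^2\Bigr|\leq Cs\Bigl(\int_{\Om_s}|\nabla u_0|^2\Bigr)^{1/2}\norm[H^2]{u_0}+Cs^2\norm[H^2]{u_0}^2 .
\]
I then combine this with the thin-collar bound $\int_{\Om_s}|\nabla u_0|^2\leq Cs\norm[H^1]{u_0}\norm[H^2]{u_0}\leq Cs\mu^3$. With $s=\rho\eps$, both error terms are $O\bigl(s\mu^2\cdot(s\mu^2)^{1/2}\bigr)$, which is $\leq s\mu^2\cdot C(\rho\delta_0)^{1/2}$. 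It therefore suffices to prove $\int_{\pa\Om}|\nabla u_0|^2\geq c\mu^2$.

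\emph{Step 3 (Rellich lower bound).} This is the step I expect to be the main obstacle, since it must hold for a packet and not for an eigenfunction. I would use the Rellich identity for $\cL_0$ with the vector field $h(x)=x$ (any fixed origin), paired with the multiplier $h\cdot\nabla\bar u_0$. Since $u_0=0$ on $\pa\Om$, $\nabla u_0=(\pa_nu_0)n$ there, and the identity reads
\[
\int_{\pa\Om}(h\cdot n)\,\widehat An\cdot n\,|\pa_nu_0|^2\dd\sigma=(2-d)\int_\Om\widehat A\nabla u_0\cdot\overline{\nabla u_0}+2\Re\int_\Om F\,h\cdot\nabla\bar u_0 .
\]
Now write $F=\mu^2u_0+\mu^2(f-u_0)+q$ and integrate by parts: $2\Re\int_\Om\mu^2u_0\,h\cdot\nabla\bar u_0=-d\mu^2\norm{u_0}^2$. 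Pairing the equation with $\bar u_0$ gives $\int\widehat A\nabla u_0\cdot\overline{\nabla u_0}=\mu^2\norm{u_0}^2+\ip{\mu^2(f-u_0)+q}{u_0}$. The right-hand side then becomes $-2\mu^2\norm{u_0}^2$ plus errors. These errors are bounded by $C(\mu^2\norm{f-u_0}_{L^2}+\norm{q})\norm{u_0}_{H^1}$, and $\norm{f-u_0}_{L^2}\leq C\eps\mu^2+C\eps\mu$ by Step 1 and \eqref{eq:Phibound}. This yields
\[
\int_{\pa\Om}|\pa_nu_0|^2\geq c\mu^2\bigl(1-C\eps\mu^2-C\gamma\bigr),
\]
where I use $\norm{u_0}_{L^2}\geq1-C\eps\mu^2$ and $|h\cdot n|\leq\operatorname{diam}\Om$. (The sign is fixed by taking absolute values; no star-shapedness is needed.) Choosing $\gamma_0$, $\delta_0$, $\rho$ small and chaining Steps 1--3 gives $\int_{\Om_{\rho\eps}}|\nabla f|^2\geq c\eps\mu^2$. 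The delicate points I would check carefully are the sign bookkeeping in the Rellich identity and that every error carries a factor $\eps\mu^2$, $\gamma$, or $\rho^{1/2}$.
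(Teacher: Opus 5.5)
Your architecture is the paper's. Your $u_0$ is its $v$, and you use the same corrector step via \eqref{eq:Phiinvertible}, the same Rellich identity with the field $x$ and absolute values, and the same normal-direction transfer between $\pa\Om$ and the collar. But your error bookkeeping loses a power of $\mu$, and that is exactly the margin the lemma is about.

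\emph{Step~3.} You bound $2\mu^2\Re\int_\Om(f-u_0)\,h\cdot\nabla\bar u_0$ by $C\mu^2\|f-u_0\|_{L^2}\|u_0\|_{H^1}\le C\mu^2\cdot\eps\mu^2\cdot\mu=C\eps\mu^5$. That is $\mu^2\cdot C\eps\mu^3$, not $\mu^2\cdot C\eps\mu^2$. For $\mu^2\simeq\delta_0/\eps$ this relative error is $\simeq\delta_0^{3/2}\eps^{-1/2}$, so as written your argument closes only for $N=\mu^2\ll\eps^{-2/3}$. Knowing only $\|f-u_0\|_{L^2}\le C\eps\mu^2$, which is what \eqref{eq:H1corrector} gives, Cauchy--Schwarz cannot do better. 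You need the structure of $f-u_0$.

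Write $f-u_0=w+(\Phi_\eps-x)\cdot\nabla u_0$ with $w\in H^1_0(\Om)$ and $\|w\|_{H^1}\le C\eps\mu^2$. In the $w$-part, integrate by parts so that the derivative lands on $w$ rather than on $u_0$:
\[
 \int_\Om w\,h\cdot\nabla\bar u_0=-\int_\Om\bar u_0\,\bigl(d\,w+h\cdot\nabla w\bigr),
 \qquad
 \Bigl|\int_\Om w\,h\cdot\nabla\bar u_0\Bigr|\le C\|u_0\|_{L^2}\|w\|_{H^1}\le C\eps\mu^2 .
\]
For the first-order part, \eqref{eq:Phibound} gives $\bigl|\int_\Om(\Phi_\eps-x)\cdot\nabla u_0\,h\cdot\nabla\bar u_0\bigr|\le C\eps\|\nabla u_0\|_{L^2}^2\le C\eps\mu^2$. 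Only with this splitting is the error $C\eps\mu^4$.

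This is precisely the paper's device. The paper expands around $f$ rather than $u_0$: it pairs $e_\eps\in H^1_0$ with $\|f\|_{L^2}=1$ after integrating by parts, and pairs the corrector term gradient-to-gradient.

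Incidentally, the correct identity carries $-2\Re\int_\Om F\,h\cdot\nabla\bar u_0$, and then the main term is $+2\mu^2\|u_0\|_{L^2}^2$. Your sign would give $(2-2d)\mu^2\|u_0\|_{L^2}^2$, not $-2\mu^2\|u_0\|_{L^2}^2$. This is harmless because you take absolute values.

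\emph{Step~2.} This step has the same defect. With your thin-collar bound $\int_{\Om_s}|\nabla u_0|^2\le Cs\mu^3$, the cross term is $Cs\,(s\mu^3)^{1/2}\mu^2=s\mu^2\cdot C(s\mu^3)^{1/2}$. It is again governed by $\eps\mu^3$, not by $(s\mu^2)^{1/2}$. Here the fix is routine. With $X=\int_{\Om_s}|\nabla u_0|^2$, absorb $C s X^{1/2}\|u_0\|_{H^2}\le\tfrac12 X+Cs^2\|u_0\|_{H^2}^2$. This gives $s\int_{\pa\Om}|\nabla u_0|^2\le CX+Cs^2\mu^4$, which is the paper's one-sided bound \eqref{eq:normalFTC}; no a priori collar bound for $u_0$ is needed.

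With both corrections your proof goes through. Choose the constants in the order $\rho$, then $\gamma_0$, then $\delta_0\ll\rho$, so that $C\eps^2\mu^4$ is absorbed into $c\rho\eps\mu^2$.
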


\begin{proof}
Since the coefficients are real, the spectral projection defining
$\mathscr P_\eps(\mu,\gamma)$ commutes with complex conjugation.  It is
therefore enough to prove the estimate for real-valued $f$: applying that
estimate to $\operatorname{Re}f$ and $\operatorname{Im}f$ and adding it gives
the complex-valued statement.

By ellipticity and Poincar\'e's inequality, the first Dirichlet frequency of
$S_\eps$ has a uniform positive lower bound
$s_*>0$.  We include $\gamma_0\leq\min\{1,s_*/4\}$ in the choice of constants;
therefore, every nonzero packet has $\mu\geq3s_*/4$ and $\mu\geq2\gamma$.  Thus,
after decreasing $\delta_0$ below the constant in
\cref{cor:upperstrip}, estimate \eqref{eq:upperstrip} gives the upper bound in
\eqref{eq:layertwosided}.  We prove the lower bound and normalize
$\norm[L^2]{f}=1$; \eqref{eq:packetbasic} applies. Let
\[
 F=\cL_\eps f=\mu^2f+q,
 \qquad \norm[L^2]{q}\leq C\gamma\mu,
\]
and let $v\in H^1_0(\Om)$ solve $\cL_0v=F$.  Elliptic estimates and
\eqref{eq:packetbasic} give
\begin{equation}\label{eq:vestimates}
 \norm[H^1]{v}\leq C\mu,\qquad
 \norm[H^2]{v}\leq C\mu^2.
\end{equation}
\rev{For the first estimate, note that}
\[
 \norm[H^{-1}]{F}=\norm[H^{-1}]{\cL_\eps f}
 \leq C\norm[L^2]{\nabla f}\leq C\mu,
\]
and Lax--Milgram gives $\norm[H^1_0]{v}\leq C\mu$.  The boundary $H^2$
estimate for the constant-coefficient operator gives the second bound of
$\norm[L^2]{F}\leq C\mu^2$.
By \cref{prop:corrector},
\begin{equation}\label{eq:correctorpacket}
 \norm[L^2]{\nabla f-(\nabla\Phi_\eps)\nabla v}
 \leq C\eps\mu^2.
\end{equation}
\rev{Indeed, define the corrector error by}
\[
 e_\eps=f-v-(\Phi_\eps-x)\cdot\nabla v\rev{.}
\]
\rev{The corrector estimate and
\eqref{eq:Phibound}, \eqref{eq:vestimates} give}
\[
 \norm[L^2]{\nabla f-(\nabla\Phi_\eps)\nabla v}
 \leq \norm[L^2]{\nabla e_\eps}
      +\norm[L^\infty]{\Phi_\eps-x}\norm[L^2]{\nabla^2v}
 \leq C\eps\mu^2.
\]

We claim
\begin{equation}\label{eq:vboundarylower}
 \int_{\pa\Om}|\nabla v|^2\dd\sigma
 \geq c\mu^2-C\eps\mu^4-C\gamma\mu^2.
\end{equation}
\rev{Translate the origin so that $|x|$ is bounded on $\overline\Om$ by a
structural constant, and use $x$ as the Rellich vector field.}  Multiplying
$-\operatorname{div}(\widehat A\nabla v)=F$ by $x\cdot\nabla v$ and
integrating twice by parts gives
\begin{align}
 \mathscr R(v)&:=\int_{\pa\Om}(x\cdot n)
       \widehat A\nabla v\cdot\nabla v\dd\sigma
 \notag\\
 &= (2-d)\int_\Om(\mu^2f+q)v\dd x
      -2\int_\Om(\mu^2f+q)x\cdot\nabla v\dd x.\label{eq:packetRellich}
\end{align}
There are no additional tangential boundary terms because $v=0$ on
$\pa\Om$, hence $\nabla v=(\partial_n v)n$ there.  To verify the signs and the
main term explicitly, use $f=v=0$ on $\pa\Om$ and
$\norm[L^2]{f}=1$.  Writing $v=f+(v-f)$ and integrating the second term by
parts gives
\begin{align}
 2\int_\Om f\,x\cdot\nabla v
 &=-d-2d\int_\Om f(v-f)
      -2\int_\Om(v-f)x\cdot\nabla f\notag\\
 &=d-2d\int_\Om fv
      -2\int_\Om(v-f)x\cdot\nabla f.\label{eq:packetRellichAlgebra}
\end{align}
 Substituting \eqref{eq:packetRellichAlgebra} into
\eqref{eq:packetRellich} shows that the part of $\mathscr R(v)$ that
contains $\mu^2$ is
\begin{equation}\label{eq:packetMainTerm}
 2\mu^2+(d+2)\mu^2\int_\Om f(v-f)
       +2\mu^2\int_\Om(v-f)x\cdot\nabla f.
\end{equation}
The corrector error $e_\eps$ belongs to $H^1_0(\Om)$, and
\cref{prop:corrector} gives
\begin{equation}\label{eq:correctorerror}
 \norm[H^1_0]{e_\eps}\leq C\eps\mu^2,
 \qquad \norm[L^2]{f-v}\leq C\eps\mu^2.
\end{equation}
Indeed, the second estimate follows from
\[
 \norm[L^2]{f-v}
 \leq\norm[L^2]{e_\eps}
    +\norm[L^\infty]{\Phi_\eps-x}\norm[L^2]{\nabla v}
 \leq C\eps\mu^2+C\eps\mu\leq C\eps\mu^2,
\]
where the last inequality uses the uniform positive lower bound for $\mu$.
Moreover,
\[
 \left|\int_\Om e_\eps x\cdot\nabla f\right|
 \leq C\norm[L^2]{f}\norm[H^1]{e_\eps},
\]
by integration by parts, while
\[
 \left|\int_\Om(\Phi_\eps-x)\cdot\nabla v\,x\cdot\nabla f\right|
 \leq C\eps\norm[L^2]{\nabla v}\norm[L^2]{\nabla f}
 \leq C\eps\mu^2.
\]
It follows from \eqref{eq:packetMainTerm}--\eqref{eq:correctorerror} that all
non-main terms containing $\mu^2$ are bounded by $C\eps\mu^4$.  The terms in
\eqref{eq:packetRellich} that contain $q$ are bounded by
\[
 C\norm[L^2]{q}\big(\norm[L^2]{v}+\norm[L^2]{\nabla v}\big)
 \leq C\gamma\mu^2.
\]
Consequently
\[
 \mathscr R(v)\geq2\mu^2-C\eps\mu^4-C\gamma\mu^2.
\]
Finally, $|\mathscr R(v)|\leq C\int_{\pa\Om}|\nabla v|^2$ by the boundedness of
$\Om$ and ellipticity.  This proves \eqref{eq:vboundarylower}.

Choose $\rho>0$ below both the tubular neighborhood radius of $\pa\Om$ and
the thickness in \eqref{eq:Phiinvertible}.  \rev{If $g\in H^1(0,\rho\eps)$, averaging
$g(0)=g(t)-\int_0^t g'(s)\dd s$ in $t$ gives}
\[
 |g(0)|^2\leq \frac C\eps\int_0^{\rho\eps}|g(t)|^2\dd t
              +C\eps\int_0^{\rho\eps}|g'(t)|^2\dd t.
\]
Applying this componentwise to $g(t)=\nabla v(y-tn(y))$ and using the
uniformly bounded Jacobian of boundary normal coordinates yields
\begin{equation}\label{eq:normalFTC}
 \int_{\pa\Om}|\nabla v|^2\dd\sigma
 \leq \frac C\eps\int_{\Om_{\rho\eps}}|\nabla v|^2\dd x
      +C\eps\int_{\Om_{\rho\eps}}|\nabla^2v|^2\dd x.
\end{equation}
On $\Om_{\rho\eps}$, \eqref{eq:Phiinvertible} and
\eqref{eq:correctorpacket} imply
\[
 \int_{\Om_{\rho\eps}}|\nabla v|^2
 \leq C\int_{\Om_{\rho\eps}}|(\nabla\Phi_\eps)\nabla v|^2
 \leq C\int_{\Om_{\rho\eps}}|\nabla f|^2+C\eps^2\mu^4.
\]
Rearranging this inequality and then using
\eqref{eq:vboundarylower}, \eqref{eq:vestimates}, and
\eqref{eq:normalFTC}, we find
\begin{align*}
 \frac1\eps\int_{\Om_{\rho\eps}}|\nabla f|^2
 &\geq \frac c\eps\int_{\Om_{\rho\eps}}|\nabla v|^2-C\eps\mu^4\\
 &\geq c\mu^2-C\eps\mu^4-C\gamma\mu^2.
\end{align*}
First choose $\gamma_0$ so that $C\gamma_0\leq c/4$ and then choose
$\delta_0$ so that $C\delta_0\leq c/4$.  Since
$\eps\mu^2\leq\delta_0$, the two error terms are thus absorbed.
This proves the lower bound.
\end{proof}

\rev{The following covering lemma gives uniform bounds on the volume and
boundary overlaps.}

\begin{lemma}\label[lemma]{lem:boundarycover}
There exist $\rho_c>0$, $\eps_c>0$, and an integer $N_{\mathrm{ov}}\geq1$,
depending only on the fixed boundary atlas, such that for every
$0<\eps<\eps_c$ one can choose
 a finite number $P_j\in\pa\Om$ with
\begin{align}
 \Om_{\rho_c\eps}&\subset\bigcup_j D(P_j,\eps),
       \label{eq:coverstrip}\\
 \sum_j\mathbf1_{D(P_j,2\eps)}&\leq N_{\mathrm{ov}}\quad\text{in }\Om,
       \label{eq:covervolumeoverlap}\\
 \sum_j\mathbf1_{\Delta(P_j,\eps)}&\leq N_{\mathrm{ov}}\quad\text{on }\pa\Om.
       \label{eq:coverboundaryoverlap}
\end{align}
The same volume-overlap bound holds after all radii are multiplied by any
fixed structural constant.
\end{lemma}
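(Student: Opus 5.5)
The covering will come from a maximal $\eps$-separated net on $\pa\Om$ combined with a volume-counting argument. Fix the boundary atlas, and use the uniform $C^2$ (in fact $C^3$) regularity of $\pa\Om$. Choose a small structural constant $a\in(0,1)$ and let $\{P_j\}$ be a maximal subset of $\pa\Om$ with $|P_j-P_k|\geq a\eps$ for $j\neq k$. Maximality gives $\pa\Om\subset\bigcup_j B(P_j,a\eps)$, and compactness makes the family finite.

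\emph{Covering \eqref{eq:coverstrip}.} Take $x\in\Om_{\rho_c\eps}$ and let $y\in\pa\Om$ be its nearest boundary point, which is unique once $\rho_c\eps$ is below the tubular radius. Then $|x-y|<\rho_c\eps$, and some $P_j$ satisfies $|y-P_j|<a\eps$. In the chart at $P_j$ we have $D(P_j,\eps)=\{|x'|<\eps,\ \psi_{P_j}(x')<x_d<\psi_{P_j}(x')+\eps\}$, with $\psi_{P_j}(0)=0$, $\nabla\psi_{P_j}(0)=0$, and $|\nabla^2\psi_{P_j}|\leq C$. Hence $|\psi_{P_j}(x')|\leq C\eps^2$ and $|\nabla\psi_{P_j}|\leq C\eps$ on $|x'|\lesssim\eps$. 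This means the patch contains the set $\{|x'|<\eps,\ 0<x_d-\psi_{P_j}(x')<\eps\}$, which in turn contains every point of $\Om$ within distance $\eps/2$ of $\Delta(P_j,\eps/2)$, up to $O(\eps^2)$ corrections. Choosing $a,\rho_c\leq1/8$ and $\eps_c$ small so that $C\eps_c\leq1/8$ places $x$ in $D(P_j,\eps)$, because $|x'|\leq|x-P_j|<(a+\rho_c)\eps$ and the vertical height of $x$ above the graph is comparable to $\dist(x,\pa\Om)<\rho_c\eps$.

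\emph{Overlap bounds.} Fix $x\in\Om$ and suppose $x\in D(P_j,2\eps)$. Then $|x-P_j|\leq C_0\eps$ with $C_0$ structural, since the patch has diameter $O(\eps)$ by the uniform bound on $\nabla\psi$. So every such $P_j$ lies in $B(x,C_0\eps)\cap\pa\Om$. The balls $B(P_j,a\eps/2)$ are pairwise disjoint, and each has surface measure $\sigma(B(P_j,a\eps/2)\cap\pa\Om)\geq c\,\eps^{d-1}$ by uniform graph regularity, valid for $\eps<\eps_c$. All of them lie in $B(x,(C_0+a)\eps)\cap\pa\Om$, whose surface measure is at most $C\eps^{d-1}$. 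Counting measure gives at most $N_{\mathrm{ov}}:=C/c$ indices. The boundary overlap \eqref{eq:coverboundaryoverlap} follows by the same argument, since $\Delta(P_j,\eps)\subset D(P_j,2\eps)$ in closure. For radii multiplied by a fixed constant $K$, repeat the count with $C_0$ replaced by $KC_0$; the bound depends only on $K$ and the atlas.

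\emph{Main obstacle.} The only delicate point is the uniform comparison between the chart geometry and Euclidean balls at scale $\eps$: the patches $D(P,r)$ must contain and be contained in balls of radius comparable to $r$, uniformly in $P$ once $r\leq2\eps_c$. This follows from the Taylor bounds on $\psi_P$ described above, and it is where $\eps_c$ and $\rho_c$ are fixed.
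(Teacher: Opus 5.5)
Your proposal is correct and follows essentially the same route as the paper. You use a maximal $a\eps$-separated net on $\pa\Om$, the nearest-point projection plus chart geometry for the strip cover, and a surface-measure packing count for both overlap bounds and the dilated version; your Taylor bounds on $\psi_{P_j}$ just make explicit the chart-versus-ball comparison that the paper leaves implicit.
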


\begin{proof}
Take a maximal $a_{\mathrm{sep}}\eps$-separated set on the compact manifold
$\pa\Om$, where $a_{\mathrm{sep}}>0$ is smaller than the bi-Lipschitz
constants of the fixed atlas.  Maximality
gives a tangential cover of $\pa\Om$.  The tubular neighborhood projection
$x\mapsto\pi(x)\in\pa\Om$ is well defined for
$\operatorname{dist}(x,\pa\Om)<\rho_c\eps$ when $\rho_c$ is sufficiently
small; the chart centered near $\pi(x)$ then contains $x$, proving
\eqref{eq:coverstrip}.  If a point belongs to $D(P_j,2\eps)$, all corresponding
centers lie in a boundary ball of radius $C\eps$.  The disjoint boundary balls
of radius $a_{\mathrm{sep}}\eps/2$ around those centers have comparable surface
measure, so a standard packing argument bounds their number by
$N_{\mathrm{ov}}$.  This proves
\eqref{eq:covervolumeoverlap}; the same argument on the boundary proves
\eqref{eq:coverboundaryoverlap} and also the fixed-dilation assertion.
\end{proof}

We can now transfer the strip mass to the boundary.  A good--bad patch
decomposition combines the preceding collar lower bound with the local Cauchy
estimate while preserving summability of the packet remainder.

\begin{theorem}
\label{thm:packet}
There exist $\eps_0,\delta,\gamma,c,C>0$ such that, for every $0<\eps<\eps_0$,
$\mu>0$, and $f\in\mathscr P_\eps(\mu,\gamma)$ satisfying
\begin{equation}\label{eq:packetrange}
                         \eps\mu^2\leq\delta,
\end{equation}
one has
\begin{equation}\label{eq:packettwosided}
 c\mu^2\norm[L^2(\Om)]{f}^2
 \leq\norm[L^2(\pa\Om)]{\nabla f}^2
 \leq C\mu^2\norm[L^2(\Om)]{f}^2.
\end{equation}
\end{theorem}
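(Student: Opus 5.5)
The upper bound in \eqref{eq:packettwosided} is immediate from \cref{prop:upperpacket}. Once $\delta\leq c$ we have $\eps\mu^2\leq\delta$, so $1+\eps\mu^2\leq 2$ and also $\eps^2\mu^2\leq c$. In the bounded range $\mu<1$ we have $\mu\geq 3s_*/4$ for nonzero packets, so $C\leq C'\mu^2$. The substance is the lower bound. We normalize $\norm[L^2]{f}=1$, reduce to real $f$ as in \cref{lem:layermass}, and fix $\gamma\leq\gamma_0$ and $\delta\leq\delta_0$ there. Set $\rho_*=\min\{\rho,\rho_c\}$. By \cref{lem:layermass}, applied with collar width $\rho_*\eps$ (shrinking $\rho$ only changes constants), we get
\[
\int_{\Om_{\rho_*\eps}}|\nabla f|^2\geq c_L\eps\mu^2 .
\]
By \cref{cor:upperstrip} with a fixed $R$ so that all $D(P_j,2\eps)\subset\Om_{R\eps}$, we also get
\[
\int_{\Om_{R\eps}}|\nabla f|^2\leq C_R\eps\mu^2 .
\]

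Next we take the cover $\{P_j\}$ of \cref{lem:boundarycover} and write $E_j=\int_{D(P_j,2\eps)}|\nabla f|^2$. A patch $j$ is \emph{good} if two conditions hold: $\int_{D(P_j,\eps)}|\nabla f|^2\geq c_1E_j$, and $\eps^2\int_{D(P_j,2\eps)}|q|^2\leq\eta^2E_j$, where $q=(\cL_\eps-\mu^2)f$. Otherwise it is \emph{bad}. On good patches, \cref{cor:rescaledcauchy} applies, since $\eps^2\mu^2\leq\eps\delta\leq L$, and gives
\[
\int_{\Delta(P_j,\eps)}|\nabla f|^2\geq \frac{c}{\eps}E_j .
\]
Summing with the boundary overlap bound \eqref{eq:coverboundaryoverlap}, we obtain
\[
N_{\rm ov}\norm[L^2(\pa\Om)]{\nabla f}^2\geq\frac c\eps\sum_{\rm good}E_j\geq\frac c\eps\Bigl(\int_{\Om_{\rho_*\eps}}|\nabla f|^2-\sum_{\rm bad}\int_{D(P_j,\eps)}|\nabla f|^2\Bigr).
\]

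It remains to show that the bad patches carry a small fraction of the collar mass $c_L\eps\mu^2$. We split them by which condition fails. If the forcing condition fails, then
\[
\int_{D(P_j,\eps)}|\nabla f|^2\leq E_j<\eta^{-2}\eps^2\int_{D(P_j,2\eps)}|q|^2 .
\]
Summing with \eqref{eq:covervolumeoverlap} and \eqref{eq:packetbasic} bounds their total by $N_{\rm ov}\eta^{-2}\eps^2\cdot 9\gamma^2\mu^2$. Compared with $\eps\mu^2$, this is $O(\eta^{-2}\gamma^2\eps)$, which is small once $\eps_0$ is small; alternatively one could shrink $\gamma$, since $\eta$ depends only on structural data and $c_1$. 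If the mass-ratio condition fails, then $\int_{D(P_j,\eps)}|\nabla f|^2<c_1E_j$. Summing with the overlap bound and the strip upper bound gives at most $c_1N_{\rm ov}C_R\eps\mu^2$. The order of choices matters here: first fix $c_1$ so that $c_1N_{\rm ov}C_R\leq c_L/4$. This determines $\eta$ and $\eps_1$ from \cref{cor:rescaledcauchy}. Then choose $\eps_0$ (and, if needed, a smaller $\gamma$) so that the forcing contribution is at most $c_Lc_1\eps\mu^2/4$. Together the bad patches then remove at most half of the collar mass, and we conclude
\[
\norm[L^2(\pa\Om)]{\nabla f}^2\geq c\mu^2 .
\]

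The main obstacle is ensuring that this choice of constants is non-circular. The parameter $c_1$ must be fixed from the \emph{upper} strip bound, which depends only on $R$, before $\eta$ is produced. The forcing-bad patches must then be absorbed using the global bound $\norm{q}\lesssim\gamma\mu$, which gains the extra factor $\eps$ relative to the collar mass. One must also check that shrinking $\gamma$ or $\eps_0$ at the end does not affect $c_L$ or $C_R$. This holds because those constants are uniform for all $\gamma\leq\gamma_0$.
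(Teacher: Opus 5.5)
Your proposal is correct and follows essentially the same route as the paper. The shared steps are: the collar lower bound from \cref{lem:layermass} at width $\min\{\rho,\rho_c\}\eps$; the cover of \cref{lem:boundarycover}; a good--bad split by mass ratio (your $c_1$ is the paper's $K_{\mathrm{loc}}^{-1}$) and by local forcing; absorption of the mass-ratio-bad patches via \cref{cor:upperstrip}, and of the forcing-bad patches via $\norm[L^2]{q}\leq3\gamma\mu$ with its extra factor $\eps$; and summation of \cref{cor:rescaledcauchy} over good patches, with constants fixed in the same order. Your reduction to real $f$ (not needed, since \cref{lem:microcauchy} already handles complex data) and the more conservative forcing threshold $c_Lc_1/4$ are harmless cosmetic differences.
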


\begin{proof}
Let $s_*>0$ denote the uniform lower bound for the first Dirichlet frequency,
and initially impose $\gamma<s_*/4$.  The final value of
$\gamma$ will be fixed below, after $K_{\mathrm{loc}}$ and
$\eta(K_{\mathrm{loc}})$. Then every nonzero packet has 
$\mu\geq3s_*/4$ and $\mu\geq2\gamma$. Thus, the low-frequency part of
\cref{prop:upperpacket} is also bounded by $C\mu^2\norm[L^2]{f}^2$, and the
high-frequency hypothesis follows from
$\eps^2\mu^2=\eps(\eps\mu^2)\leq\eps_0\delta$; moreover, $1+\eps\mu^2\leq1+\delta$.  After decreasing $\eps_0$ and $\delta$ once, the
upper estimate follows.  For the lower estimate, \eqref{eq:packetbasic}
applies.  Normalize
$\norm[L^2]{f}=1$ and use the notation
$q=(\cL_\eps-\mu^2)f$.  Apply the proof of \cref{lem:layermass}, replacing its
collar constant by $\min\{\rho,\rho_c\}$ if necessary.  The normal-collar
argument is unchanged; only its structural constants vary.  Choose the family
furnished by \cref{lem:boundarycover}.  Fix a large number $K_{\mathrm{loc}}$
and call an index $j$ good if
\begin{align}
 \int_{D(P_j,2\eps)}|\nabla f|^2
   &\leq K_{\mathrm{loc}}\int_{D(P_j,\eps)}|\nabla f|^2,\label{eq:good1}\\
 \eps^2\int_{D(P_j,2\eps)}|q|^2
   &\leq\eta^2\int_{D(P_j,2\eps)}|\nabla f|^2.\label{eq:good2}
\end{align}
Here, $\eta=\eta(K_{\mathrm{loc}})$ is supplied by \cref{lem:microcauchy}, with
$c_1=K_{\mathrm{loc}}^{-1}$ and $\beta=\eps^2\mu^2\leq1$.  All remaining
bounds in that lemma are already controlled by the structural
coefficient and atlas constants.

For indices that fail \eqref{eq:good1}, bounded overlap and
\cref{cor:upperstrip} (used with a fixed structural dilation $R_0$) give
\[
 \sum_{j\text{ failing }\eqref{eq:good1}}
       \int_{D(P_j,\eps)}|\nabla f|^2
 \leq \frac C{K_{\mathrm{loc}}}\int_{\Om_{R_0\eps}}|\nabla f|^2
 \leq \frac C{K_{\mathrm{loc}}}\eps\mu^2.
\]
Similarly, for indices that fail \eqref{eq:good2},
\[
 \sum_{j\text{ failing }\eqref{eq:good2}}
       \int_{D(P_j,\eps)}|\nabla f|^2
 \leq C\eta^{-2}\eps^2\norm[L^2]{q}^2
 \leq C\eta^{-2}\eps^2\gamma^2\mu^2.
\]
We fix the constants in the following order. First, take $K_{\mathrm{loc}}$ so
large that $C/K_{\mathrm{loc}}$ is smaller than one quarter of the layer-mass
constant. Next, take $\eta=\eta(K_{\mathrm{loc}})$ from
\cref{lem:microcauchy}, choose
$0<\gamma\leq\min\{\gamma_0,s_*/4\}$ as required by
\cref{lem:layermass}, and then reduce $\eps_0$ so that it lies below the
corrector, atlas, and covering thresholds and so that
$C\eta^{-2}\eps_0\gamma^2$ is smaller than another quarter of that constant.
Finally, choose
$\delta\leq\delta_0$ small enough for \cref{lem:layermass} and the upper
packet estimate, and also so that
$\eps^2\mu^2=\eps(\eps\mu^2)\leq\eps_0\delta\leq1$. Thus, the rescaled
zeroth-order coefficient in \eqref{eq:explicitrescaling} belongs to the fixed
class used when $\eta$ was selected.  Equations \eqref{eq:coverstrip} and
\eqref{eq:layertwosided} then show that the good cylinders retain at least
$c\eps\mu^2$ of gradient mass. In every good cylinder,
\cref{cor:rescaledcauchy} applies.  Summing \eqref{eq:rescaledcauchy} and using
\eqref{eq:coverboundaryoverlap} gives
\[
 N_{\mathrm{ov}}\int_{\pa\Om}|\nabla f|^2\dd\sigma
 \geq\sum_{j\ \mathrm{good}}
       \int_{\Delta(P_j,\eps)}|\nabla f|^2\dd\sigma
 \geq\frac c\eps\sum_{j\ \mathrm{good}}
                \int_{D(P_j,\eps)}|\nabla f|^2\dd x
 \geq c\mu^2.
\]
This proves the lower estimate and hence the theorem.
\end{proof}

\begin{remark}\label[remark]{rem:whynew}
For a single eigenfunction with an eigenvalue greater than $1$, the lower estimate
in \eqref{eq:packettwosided} is \cite[Theorem~1.4]{KenigLinShen2013}; the
bounded-frequency case is included in the above argument.  The point of
\cref{lem:microcauchy,thm:packet} is the stability under the packet remainder
$q$, whose norm is $O(\gamma\mu)$.  A single-eigenfunction estimate is not, by
itself, sufficient for multidimensional wave observability because
eigenfrequencies need not have a uniform gap.
\end{remark}

\section{From packets to wave observability}\label{sec:observability}

\rev{Let $S$ be a positive self-adjoint operator on a Hilbert space $H$, and set}
\[
 \mathcal D_{\mathrm{fin}}(S)
 =\bigcup_{\Lambda>0}\Ran\one_{[0,\Lambda]}(S).
\]
In the applications below, $S$ has a compact resolvent, so
$\mathcal D_{\mathrm{fin}}(S)$ consists of finite spectral sums.  We first
convert an upper estimate on each unit
packet into a uniform time-integrated observation estimate.

\begin{lemma}
\label[lemma]{lem:admissibility}
Let $H$ and $Y$ be Hilbert spaces, let $S$ be a positive self-adjoint
operator on $H$ with compact resolvent and
$s_*:=\inf\Spec(S)>0$, and let
$\mathcal C:\mathcal D_{\mathrm{fin}}(S)\to Y$ be linear.  Assume that there is
a constant $C_{\mathrm{up}}>0$ such that, for every integer $k\geq0$ and every
$h\in H$,
\begin{equation}\label{eq:abstractupperpacket}
 \norm[Y]{\mathcal C\,\one_{[k,k+1)}(S)h}
 \leq C_{\mathrm{up}}(1+k)\norm[H]{\one_{[k,k+1)}(S)h},
 \qquad k=0,1,\ldots.
\end{equation}
Then $\mathcal C$ extends to a bounded operator from $D(S^2)$, equipped with
its graph norm, into $Y$.  Moreover, for every $T>0$ there is $K_T>0$,
depending only on $T,C_{\mathrm{up}},s_*$, such that
\begin{equation}\label{eq:schradmissibility}
 \int_0^T\norm[Y]{\mathcal C e^{itS}h}^2\dd t
 \leq K_T\norm[H]{Sh}^2,
 \qquad h\in D(S).
\end{equation}
The left-hand side is defined by completion from finite spectral sums.
Consequently, $z\mapsto\mathcal C z$ is admissible for $z''+S^2z=0$ on the
energy space $D(S)\times H$, with a constant that depends only on
$T,C_{\mathrm{up}},s_*$.  If
\eqref{eq:schradmissibility} holds on $[0,T_a]$ with constant $K_{T_a}$, then
one may take
\begin{equation}\label{eq:admissibilitygrowth}
              K_T\leq\left\lceil\frac{T}{T_a}\right\rceil K_{T_a},
              \qquad T>0.
\end{equation}
\end{lemma}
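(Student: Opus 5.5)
Decompose $h=\sum_{k\geq0}h_k$ with $h_k=\one_{[k,k+1)}(S)h$, which are mutually orthogonal. Since $S\geq s_*>0$ on the spectral support of $h$, we have $1+k\leq C(s_*)\,\sigma$ for $\sigma\in[k,k+1)\cap\Spec(S)$, so $(1+k)\norm[H]{h_k}\leq C(s_*)\norm[H]{Sh_k}$.

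\emph{Boundedness on $D(S^2)$.} For a finite spectral sum, \eqref{eq:abstractupperpacket} and Cauchy--Schwarz with weights $(1+k)^{-1}$ give
\[
\norm[Y]{\mathcal C h}\leq \sum_k C_{\mathrm{up}}(1+k)\norm{h_k}
\leq C_{\mathrm{up}}\Bigl(\sum_k(1+k)^{-2}\Bigr)^{1/2}\Bigl(\sum_k(1+k)^4\norm{h_k}^2\Bigr)^{1/2}
\leq C\norm[H]{S^2h}.
\]
Hence $\mathcal C$ extends by density of finite sums in $D(S^2)$ (graph norm).

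\emph{Time-integrated estimate.} The naive bound loses a factor because the packets are not orthogonal after applying $\mathcal C$. The plan is the standard Ingham/almost-orthogonality trick with a smooth time cutoff. Fix $T_a>0$ and choose $\chi\in C_c^\infty(\R)$ with $\chi\geq\one_{[0,T_a]}$ and $\hat\chi$ rapidly decaying. Then
\[
\int_0^{T_a}\norm{\mathcal Ce^{itS}h}^2\,dt\leq\sum_{k,l}\Bigl|\int\chi(t)\ip{\mathcal Ce^{itS}h_k}{\mathcal Ce^{itS}h_l}_Y\,dt\Bigr|.
\]
Expand each $h_k$ in eigenvectors $\phi$ with eigenvalue $\sigma\in[k,k+1)$. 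The $(k,l)$ term equals $\sum_{\sigma,\tau}c_\sigma\bar c_\tau\hat\chi(\tau-\sigma)\ip{\mathcal C\phi_\sigma}{\mathcal C\phi_\tau}$. I would rather avoid individual eigenvectors, since there are no multiplicity bounds. Instead, write $\hat\chi(\tau-\sigma)=\int\chi(t)e^{it(\sigma-\tau)}dt$, so the term is $\int\chi\ip{\mathcal C e^{itS}h_k}{\mathcal C e^{itS}h_l}$. Integrating by parts in $t$ $m$ times, using $e^{itS}h_k=e^{itk}e^{it(S-k)}h_k$ with $\norm{S-k}\leq1$ on the packet, gives
\[
\Bigl|\int\chi\ip{\mathcal C e^{itS}h_k}{\mathcal C e^{itS}h_l}\Bigr|\leq C_m(1+|k-l|)^{-m}\sup_t\norm{\mathcal C e^{itS}\,p(t,S)h_k}\,\norm{\mathcal C e^{itS}\,p'(t,S)h_l}.
\]
Here the $p$ are bounded functions supported in the packet, so \eqref{eq:abstractupperpacket} applies and bounds the product by $C(1+k)(1+l)\norm{h_k}\norm{h_l}$. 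Concretely, one factors out $e^{it(k-l)}$ and integrates by parts against $\chi(t)\,\langle\cdot\rangle$, whose $t$-derivatives produce $(S-k)$, $(S-l)$ factors of norm $\leq1$. Schur's test with kernel $(1+|k-l|)^{-2}$ then gives the bound $C\sum_k(1+k)^2\norm{h_k}^2\leq K_{T_a}\norm{Sh}^2$, with $K_{T_a}$ depending only on $T_a$, $C_{\mathrm{up}}$ and $s_*$. Extending to $h\in D(S)$ by completion defines the left side.

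\emph{Growth bound and admissibility.} For \eqref{eq:admissibilitygrowth}, split $[0,T]$ into $\lceil T/T_a\rceil$ intervals of length at most $T_a$. On each, substitute $t=t_j+s$, apply the estimate to $e^{it_jS}h$, and use $\norm{Se^{it_jS}h}=\norm{Sh}$. For wave admissibility, write $z(t)=\Re$-combinations of $e^{\pm itS}h_\pm$ with $h_\pm=\tfrac12(z_0\mp iS^{-1}z_1)$, so that $\norm{Sh_\pm}^2\lesssim$ the energy $\norm{Sz_0}^2+\norm{z_1}^2$. The main obstacle is the off-diagonal almost-orthogonality step: making the integration by parts rigorous while keeping $\mathcal C$ applied only to packet-localized vectors. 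The careful formulation is to differentiate the scalar function $t\mapsto\ip{\mathcal Ce^{it(S-k)}h_k}{\mathcal Ce^{it(S-l)}h_l}$, which is smooth because $\mathcal C$ is applied to finite-dimensional packet ranges.
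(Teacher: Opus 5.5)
Your proof is correct, but the key step, the time-integrated bound \eqref{eq:schradmissibility}, is proved differently from the paper. The boundedness on $D(S^2)$, the wave admissibility via $h_\pm$, and the growth bound \eqref{eq:admissibilitygrowth} match the paper's treatment.

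\textbf{The paper's route (frequency side).} With $\chi\in C_c^\infty$ equal to $1$ on $[0,T]$, Plancherel in $t$ turns $\int\norm[Y]{\chi\,\mathcal C e^{itS}h}^2\dd t$ into $\int\norm[Y]{\sum_k\mathcal C\,\widehat\chi(\tau-S)h_k}^2\dd\tau$. Since $\widehat\chi(\tau-S)$ preserves each unit packet, the $k$th term is bounded by $C_{\mathrm{up}}(1+k)a(\tau-k)\norm{h_k}$ with $a$ rapidly decreasing. A pointwise Cauchy--Schwarz in $k$, followed by integration in $\tau$, gives $C\sum_k(1+k)^2\norm{h_k}^2$.

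\textbf{Your route (time domain).} You expand the square into the Gram matrix over pairs $(k,l)$ and extract the phase $e^{it(k-l)}$. You then integrate by parts twice, using $\norm{(S-k)\one_{[k,k+1)}(S)}\leq1$, so every differentiated vector stays in the $k$th packet and \eqref{eq:abstractupperpacket} still applies. Schur's test for the kernel $(1+|k-l|)^{-2}$ finishes the argument.

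\textbf{Comparison.} The paper's argument is shorter and never differentiates $t\mapsto\mathcal C e^{it(S-k)}h_k$. Yours avoids the vector-valued Fourier transform and makes the off-diagonal almost-orthogonality explicit. The cost is justifying smoothness, which you correctly obtain from finite-dimensionality of the packet ranges (compact resolvent) for finite spectral sums. Both give $K_T$ depending only on $T$ (through $\chi$), $C_{\mathrm{up}}$ and $s_*$.

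\textbf{Small correction.} $z(t)$ is not a ``$\Re$-combination'' but exactly $e^{itS}h_++e^{-itS}h_-$ with your $h_\pm$. The backward term $e^{-itS}h_-$ then needs one line: time reversal and translation, or the same integration by parts run for $e^{-itS}$.
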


\begin{proof}
We first take $h$ to be a finite spectral sum.  The resulting bound then
defines the observation for every $h\in D(S)$ by completion.  Moreover, with
$h_k=\one_{[k,k+1)}(S)h$,
\eqref{eq:abstractupperpacket} and weighted Cauchy--Schwarz give
\[
 \norm[Y]{\mathcal C h}
 \leq C_{\mathrm{up}}\sum_k(1+k)\norm[H]{h_k}
 \leq C C_{\mathrm{up}}
       \left(\sum_k(1+k)^4\norm[H]{h_k}^2\right)^{1/2}
 \leq C_{s_*}C_{\mathrm{up}}\norm[H]{S^2h}.
\]
Thus $\mathcal C$ extends continuously from $D(S^2)$, equipped with its graph
norm, to $Y$.  Choose
$\chi\in C_c^\infty(\R)$ equal to $1$ on $[0,T]$, and decompose
$h=\sum_{k\geq0}h_k$ into unit spectral intervals.  Using the unitary Fourier
transform in $t$, Plancherel's theorem gives
\[
 \int_\R\norm[Y]{\chi(t)\mathcal C e^{itS}h}^2\dd t
 =\int_\R\norm[Y]{\sum_k \mathcal C\,\widehat\chi(\tau-S)h_k}^2\dd\tau.
\]
The spectral multiplier $\widehat\chi(\tau-S)$ leaves each unit packet
invariant.  Hence \eqref{eq:abstractupperpacket} bounds the norm of its $k$th
contribution by
\[
 C_{\mathrm{up}}(1+k)a(\tau-k)\norm[H]{h_k},\qquad
 a(t)=\sup_{0\leq s\leq1}|\widehat\chi(t-s)|.
\]
The function $a$ is nonnegative and rapidly decreasing.  Put
$b_k=(1+k)\norm[H]{h_k}$. Pointwise Cauchy--Schwarz gives
\[
 \left(\sum_k a(\tau-k)b_k\right)^2
 \leq\left(\sum_k a(\tau-k)\right)
      \left(\sum_k a(\tau-k)b_k^2\right).
\]
The first factor is uniformly bounded in $\tau$, while integration of the
second gives $\norm[L^1(\R)]{a}\sum_k b_k^2$.  The Plancherel integral is
therefore bounded by
\[
 C_{\chi}C_{\mathrm{up}}^2\sum_k(1+k)^2\norm[H]{h_k}^2
 \leq C_{\chi,s_*}C_{\mathrm{up}}^2\norm[H]{Sh}^2.
\]
This proves \eqref{eq:schradmissibility}.  Finally, for
$z(t)=\cos(tS)z_0+S^{-1}\sin(tS)z_1$, set
\[
 h_+=\tfrac12(z_0-iS^{-1}z_1),\qquad
 h_-=\tfrac12(z_0+iS^{-1}z_1).
\]
Then $z(t)=e^{itS}h_++e^{-itS}h_-$, and
\eqref{eq:schradmissibility} applied to the two terms, with time reversed for
$h_-$, proves the wave admissibility with
$\norm{Sh_+}^2+\norm{Sh_-}^2=\tfrac12(\norm{Sz_0}^2+\norm{z_1}^2)$.
The final assertion follows by partitioning $[0,T]$ into at most
$\lceil T/T_a\rceil$ intervals of length at most $T_a$ and translating the
solution on each interval.
\end{proof}

\begin{blackbox}
Let $\mathcal H$ and $Y$ be Hilbert spaces, let $\mathsf A$ be self-adjoint
on $\mathcal H$, and let $\mathsf B:D(\mathsf A)\to Y$ be an admissible
observation.  Fix a packet half-width $\omega>0$, and let $K_{\pi/\omega}$ be
such that
\begin{equation*}\tag{M1}\label{eq:Milleradmissibility}
 \int_0^{\pi/\omega}\norm[Y]{\mathsf B e^{it\mathsf A}W}^2\dd t
 \leq K_{\pi/\omega}\norm[\mathcal H]{W}^2,
 \qquad W\in D(\mathsf A).
\end{equation*}
Assume that, for some $D_{\mathrm{pkt}}>0$, every spectral packet of width
$2\omega$ satisfies
\begin{equation*}\tag{M2}\label{eq:Millerpacket}
 \norm[\mathcal H]{W}^2\leq D_{\mathrm{pkt}}\norm[Y]{\mathsf B W}^2,
 \quad
 W\in\Ran\one_{[\lambda-\omega,\lambda+\omega]}(\mathsf A),
 \quad \lambda\in\R.
\end{equation*}
Then there exist $T_*,C_*>0$, depending only on
$\omega,D_{\mathrm{pkt}},K_{\pi/\omega}$, such that
\begin{equation*}\tag{M3}\label{eq:Millerobservability}
 \norm[\mathcal H]{W}^2
 \leq C_*\int_0^{T_*}\norm[Y]{\mathsf B e^{it\mathsf A}W}^2\dd t,
 \qquad W\in\mathcal H.
\end{equation*}
This is the sufficient direction of \cite[Corollary~2.17]{Miller2012}.
In particular, $T_*$ and $C_*$ are independent of spectral multiplicities,
finite-dimensional truncations, and their largest eigenvalues.
\end{blackbox}

\begin{proposition}
\label[proposition]{prop:wavepacketcriterion}
Let $H,Y,S$, and $\mathcal C$ be as in \cref{lem:admissibility}.  Assume the
upper packet estimate \eqref{eq:abstractupperpacket}, and suppose that there are
fixed constants $\gamma>0$ and $C_{\mathrm{low}}>0$ such that the reverse
estimate
\begin{equation}\label{eq:abstractlowerpacket}
 \norm[H]{Sh}^2\leq C_{\mathrm{low}}\norm[Y]{\mathcal C h}^2
 \quad\text{for every }h\in\Ran\one_{[\mu-\gamma,\mu+\gamma]}(S)
 \text{ and every }\mu\in\R.
\end{equation}
Fix also a reference time $T_a>0$, and choose $K_a>0$ so that
the admissibility estimate
\begin{equation}\label{eq:referenceadmissibility}
 \int_0^{T_a}\norm[Y]{\mathcal C e^{itS}h}^2\dd t
 \leq K_a\norm[H]{Sh}^2,
 \qquad h\in D(S),
\end{equation}
holds.  Then there exist an observation time $T_*>0$ and a constant $C_*>0$
with the following property: for every initial datum
$(z_0,z_1)\in D(S)\times H$, the energy solution
\begin{equation}\label{eq:abstractwavesolution}
 z(t)=\cos(tS)z_0+S^{-1}\sin(tS)z_1
\end{equation}
of $z''+S^2z=0$ satisfies
\begin{equation}\label{eq:abstractwaveobs}
 \norm[H]{Sz_0}^2+\norm[H]{z_1}^2
 \leq C_*\int_0^{T_*}\norm[Y]{\mathcal C z(t)}^2\dd t.
\end{equation}
The constants $T_*$ and $C_*$ depend only on
$\gamma,C_{\mathrm{low}},s_*,T_a$, and $K_a$; in particular, they are
independent of
spectral multiplicities, the dimension of a spectral truncation and its
largest eigenvalue.
\end{proposition}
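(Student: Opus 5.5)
We reduce \cref{prop:wavepacketcriterion} to Miller's black box. We take $\mathcal H=D(S)$ with the norm $\norm[\mathcal H]{h}=\norm[H]{Sh}$, which is equivalent to the graph norm because $s_*>0$. On this space we take $\mathsf A=S$, which is self-adjoint on $\mathcal H$ since it commutes with $S$, and we take $\mathsf B=\mathcal C$. We set $\omega=\gamma$.

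Hypothesis \eqref{eq:Millerpacket} is exactly \eqref{eq:abstractlowerpacket}, with $D_{\mathrm{pkt}}=C_{\mathrm{low}}$. For $\lambda<0$ the packet space is either trivial or part of a packet centred at some $\mu\ge0$, and \eqref{eq:abstractlowerpacket} is assumed for every $\mu\in\R$ anyway.

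For admissibility \eqref{eq:Milleradmissibility} we need a constant on the interval $[0,\pi/\gamma]$. We obtain it from \eqref{eq:referenceadmissibility} through the translation argument behind \eqref{eq:admissibilitygrowth}, which gives $K_{\pi/\gamma}\le\lceil \pi/(\gamma T_a)\rceil K_a$. This constant depends only on $\gamma,T_a,K_a$. The observation is first defined on finite spectral sums and then extended by completion, exactly as in \cref{lem:admissibility}. Miller's black box then yields $T_*,C_*$, depending only on $\gamma$, $C_{\mathrm{low}}$ and $K_{\pi/\gamma}$, such that
\[
\norm[H]{SW}^2\le C_*\int_0^{T_*}\norm[Y]{\mathcal C e^{itS}W}^2\dd t,\qquad W\in D(S),
\]
and the same bound holds for $e^{-itS}$, by applying the result with $\mathsf A=-S$. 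The hypotheses for $-S$ are identical: the packets are the same sets, and admissibility for $-S$ follows by time reversal combined with the translation argument.

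The main obstacle is passing from the half-wave estimate to the wave estimate \eqref{eq:abstractwaveobs}. Write $z(t)=e^{itS}h_++e^{-itS}h_-$ with $h_\pm$ as in the proof of \cref{lem:admissibility}, so that $\norm{Sh_+}^2+\norm{Sh_-}^2=\tfrac12(\norm{Sz_0}^2+\norm{z_1}^2)$. The observation $\mathcal C z(t)$ mixes the two components, so the half-wave estimates cannot simply be added.

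To get around this, we apply Miller's criterion directly to a first-order system. Take the state space $\mathcal H_2=D(S)\times D(S)$ with the diagonal generator $\mathsf A_2=\mathrm{diag}(S,-S)$ and the observation $\mathsf B_2(h_+,h_-)=\mathcal C(h_++h_-)$. Then $\mathsf B_2e^{it\mathsf A_2}(h_+,h_-)=\mathcal C z(t)$, and admissibility for $\mathsf A_2$ follows from the two half-wave admissibility estimates. It remains to check the packet condition \eqref{eq:Millerpacket} for $\mathsf A_2$. Since $\Spec(S)\subset[s_*,\infty)$, we shrink $\omega$ to $\min\{\gamma,s_*/2\}$. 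Then any window $[\lambda-\omega,\lambda+\omega]$ meets the spectrum of at most one of $S$ and $-S$: for $\lambda\ge0$ only $S$ contributes, and for $\lambda<0$ only $-S$. A packet of $\mathsf A_2$ is therefore of the form $(h,0)$ or $(0,h)$, with $h$ a packet of $S$ of half-width $\omega\le\gamma$. For such a packet $\norm[Y]{\mathsf B_2 W}=\norm[Y]{\mathcal Ch}$, so \eqref{eq:abstractlowerpacket} gives \eqref{eq:Millerpacket} with $D_{\mathrm{pkt}}=C_{\mathrm{low}}$.

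Miller's black box, applied to $(\mathcal H_2,\mathsf A_2,\mathsf B_2)$, then gives \eqref{eq:abstractwaveobs}. The resulting constants depend only on $\gamma,C_{\mathrm{low}},s_*,T_a,K_a$, and are independent of multiplicities and spectral truncation because the black box asserts this independence.
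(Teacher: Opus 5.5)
Your proposal is correct and follows essentially the same route as the paper. You diagonalize the wave into the first-order system $\mathrm{diag}(S,-S)$ on $D(S)\times D(S)$ with observation $\mathcal C(h_++h_-)$, use the spectral separation $2s_*$ so that a narrow packet lies on a single branch, obtain admissibility from $K_a$ by translation and time reversal, and apply Miller's black box. The differences are cosmetic: you omit the $2^{-1/2}$ normalization (so $D_{\mathrm{pkt}}=C_{\mathrm{low}}$ and a factor $2$ appears at the end), you take $\omega=\min\{\gamma,s_*/2\}$ instead of $s_*/4$, and your preliminary half-wave application to $\pm S$ is harmless but not needed.
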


\begin{proof}
\rev{We verify admissibility and the packet lower bound for the first-order
wave system.}

\smallskip
\noindent\emph{Step 1: energy space and diagonalization.}
Set $X=D(S)$ with the norm $\norm[X]{h}=\norm[H]{Sh}$.  Since $S\geq s_*>0$,
this norm is equivalent to the graph norm. In $X$, the operator $S$ is
self-adjoint with the domain $D(S^2)$: in the spectral representation, the scalar
inner product on $X$ has weight $s^2$, while $S$ remains multiplied by $s$.
Define
\[
 U(z_0,z_1)
 =2^{-1/2}(z_0-iS^{-1}z_1,\ z_0+iS^{-1}z_1)
 =:(w_+,w_-).
\]
The identity
\[
 \norm[X\oplus X]{U(z_0,z_1)}^2
 =\norm[H]{Sz_0}^2+\norm[H]{z_1}^2
\]
shows that $U$ is unitary from the wave energy space $D(S)\times H$ to
$X\oplus X$.  If $z$ solves $z''+S^2z=0$, then
\[
 U(z(t),z'(t))=(e^{itS}w_+,e^{-itS}w_-)
               =e^{it\mathcal G}(w_+,w_-),
 \qquad \mathcal G=S\oplus(-S).
\]
Thus, $\mathcal G$ is the self-adjoint first-order generator to which the black
box will be applied.

\smallskip
\noindent\emph{Step 2: the transformed observation and admissibility.}
Since
\[
 z(t)=2^{-1/2}\big(e^{itS}w_++e^{-itS}w_-\big),
\]
the transformed observation is
\[
 \mathcal B(w_+,w_-)=2^{-1/2}\mathcal C(w_++w_-).
\]
The preceding lemma gives $\mathcal C:D(S^2)\to Y$ and wave admissibility;
equivalently, $\mathcal B$ is an admissible observation for
$e^{it\mathcal G}$ on
$X\oplus X$.  Moreover, its admissibility constant on any interval of length
$T$ is at most $\lceil T/T_a\rceil K_a$ when $T\geq T_a$, and at most $K_a$
when $T\leq T_a$.

\smallskip
\noindent\emph{Step 3: verification of the lower bound of the packet.}
Put $\widetilde\gamma=\min\{\gamma,s_*/4\}$.  The estimate
\eqref{eq:abstractlowerpacket} is inherited by packets of this smaller width.
Because the positive and negative spectra of $\mathcal G$ are separated by
$2s_*$, a $\widetilde\gamma$-packet of $\mathcal G$ reaches at most one branch. Therefore, this packet consists of either vectors $(h,0)$, with $h$ in a
packet of $S$, or vectors $(0,h)$, with $h$ in the corresponding reflected
packet.  In the first case,
\[
 \norm[X\oplus X]{(h,0)}^2=\norm[H]{Sh}^2
 \leq C_{\mathrm{low}}\norm[Y]{\mathcal C h}^2
 =2C_{\mathrm{low}}\norm[Y]{\mathcal B(h,0)}^2,
\]
and the negative branch is identical.  Hence \eqref{eq:Millerpacket} holds for
$(\mathcal G,\mathcal B)$ with packet constant
$D_{\mathrm{pkt}}=2C_{\mathrm{low}}$.

\smallskip
\noindent\emph{Step 4: application of the black box.}
By Step~3, we may apply the black box to
$(\mathsf A,\mathsf B)=(\mathcal G,\mathcal B)$ with
$\omega=\widetilde\gamma$ and $D_{\mathrm{pkt}}=2C_{\mathrm{low}}$.
The constant in \eqref{eq:Milleradmissibility} is
$K_{\pi/\widetilde\gamma}$, which is bounded in terms of
$T_a,K_a$, and $\widetilde\gamma$ by Step~2.  Hence
\eqref{eq:Millerobservability} gives $T_*$ and $C_*$ depending only on
$\gamma,C_{\mathrm{low}},s_*,T_a,K_a$.  Using the identities in Steps~1--2,
its conclusion
is exactly \eqref{eq:abstractwaveobs}.  Neither the spectral dimension nor the
largest eigenvalue enters either constant.
\end{proof}

\begin{proof}[Proof of \cref{thm:main}]
Fix $N\leq\delta\eps^{-1}$ and restrict $S_\eps$ to
$\cA_{\eps,N}$.  If this space is zero, the assertion is immediate.
Otherwise, the ellipticity and Poincar\'e's inequality give a positive
lower bound for $\inf\Spec S_\eps$, uniform in $\eps$.

For a unit-width packet centered at $\mu\leq\sqrt N+1$, we have
\[
 \eps\mu^2\leq2\eps N+2\eps\leq2\delta+2\eps_0,
 \qquad
 \eps^2\mu^2\leq2\delta\eps+2\eps^2.
\]
After decreasing $\eps_0$ and $\delta$ once more,
\cref{prop:upperpacket} thus
gives
\[
 \norm[L^2(\pa\Om)]{\nabla h}^2
 \leq C(1+\mu)^2\norm[L^2(\Om)]{h}^2
\]
for every unit packet below $\sqrt N$, uniformly in $\eps$ and $N$.  Taking
square roots and centering $[k,k+1)$ at $k+\tfrac12$ gives
\eqref{eq:abstractupperpacket}.  Hence \cref{lem:admissibility} gives a
uniform upper observation estimate on every fixed interval.

Choose the fixed packet width $\gamma$ from \cref{thm:packet}.  If a nonzero
$h\in\Ran\one_{[\mu-\gamma,\mu+\gamma]}(S_\eps)$ lies in the truncated
space, then $\mu\leq\sqrt N+\gamma$ and hence
$\eps\mu^2\leq C\eps N+C\eps\gamma^2$.  Since the packet is nonzero and
$\gamma<s_*/4$, also $\mu\geq s_*/2>0$.  Viewed in the full space, the
restricted packet is a spectral subpacket of the same interval for $S_\eps$.
First reduce $\eps_0$ so that $C\eps_0\gamma^2$ is below the packet threshold,
and then reduce $\delta$.  Hence \cref{thm:packet}, together with
$\norm{S_\eps h}\leq(\mu+\gamma)\norm{h}\leq C\mu\norm{h}$, yields
\[
 \norm[L^2]{S_\eps h}^2\leq C\norm[L^2(\pa\Om)]{\nabla h}^2.
\]
This is \eqref{eq:abstractlowerpacket}, with constants independent of
$\eps$ and $N$.  We may therefore apply \cref{prop:wavepacketcriterion} with
$H=L^2(\Om)$, $Y=L^2(\pa\Om;\C^d)$, and
$\mathcal C h=\nabla h|_{\pa\Om}$.  Together with the admissibility estimate
above, this
gives both sides of the observation inequality on a fixed interval of length
$T_*$.  Set $T_0=2T_*$.  For $T\geq T_0$, the intervals
$[jT_*,(j+1)T_*]$, $0\leq j<\lfloor T/T_*\rfloor$, lie in $[0,T]$.
Applying the lower estimate on each time translate and using conservation of
energy gives
\[
 \int_0^T\norm[Y]{\mathcal C u(t)}^2\dd t
 \geq cT_*\Big\lfloor\frac{T}{T_*}\Big\rfloor
       \cE_\eps(u_{\eps,0},u_{\eps,1})
 \geq cT\cE_\eps(u_{\eps,0},u_{\eps,1}).
\]
For the upper estimate, cover $[0,T]$ by at most
$\lceil T/T_*\rceil$ intervals of length $T_*$ and use the fixed-interval
admissibility estimate on every translate.  Since $T\geq2T_*$, this gives the
upper bound $CT\cE_\eps(u_{\eps,0},u_{\eps,1})$.  This proves
\eqref{eq:observability} with constants independent of $T,\eps$, and $N$.
\end{proof}

\begin{proof}[Proof of \cref{cor:control}]
For Dirichlet solutions, the conormal derivative
$\pa_{\nu_\eps}u=n\cdot A(x/\eps)\nabla u$ is uniformly comparable with
$|\nabla u|$ on $\pa\Om$, because the tangential derivative vanishes and
\eqref{eq:ellipticity} holds.  More precisely,
$\nabla u=(\partial_n u)n$ and
\[
 \kappa|\nabla u|\leq|\pa_{\nu_\eps}u|
 \leq\kappa^{-1}|\nabla u|\quad\text{on }\pa\Om.
\]
Thus \cref{thm:main} is equivalent, with uniform constants, to the
corresponding two-sided estimate with $\pa_{\nu_\eps}u$ as the observation.

If $\cA_{\eps,N}=\{0\}$, take $g_\eps=0$.  Both the terminal constraint and
\eqref{eq:controlnorm} are then immediate.  \rev{Assume now that $\cA_{\eps,N}\ne\{0\}$ and set}
\[
 E_{\eps,N}=\cA_{\eps,N}\times\cA_{\eps,N},\qquad
 \norm[E_{\eps,N}]{(p_0,p_1)}^2
 =\norm[L^2]{S_\eps p_0}^2+\norm[L^2]{p_1}^2.
\]
For $p=(p_0,p_1)\in E_{\eps,N}$, denote the associated homogeneous Dirichlet
wave by $p(t)$ and define
\[
 \mathcal O_Tp=\pa_{\nu_\eps}p
       \in L^2(\pa\Om\times(0,T)).
\]
The two bounds in \eqref{eq:observability} are equivalent to
\begin{equation}\label{eq:HUMobservationbounds}
 cT\norm[E_{\eps,N}]{p}^2
 \leq\norm[L^2(\pa\Om\times(0,T))]{\mathcal O_Tp}^2
 \leq CT\norm[E_{\eps,N}]{p}^2.
\end{equation}
Consequently, the HUM Gramian
$G_T=\mathcal O_T^*\mathcal O_T$ satisfies
$cT\Id\leq G_T\leq CT\Id$ on $E_{\eps,N}$.

Only the low-frequency part of the initial state enters the constraint.  It
defines the continuous functional
\begin{equation}\label{eq:HUMfunctional}
 \ell_\theta(\zeta_0,\zeta_1)
 =\langle P_{\eps,N}\theta_{\eps,1},\zeta_0\rangle_{H^{-1},H^1_0}
  -(P_{\eps,N}\theta_{\eps,0},\zeta_1)_{L^2}
 \quad\text{on }E_{\eps,N}.
\end{equation}
By Lax--Milgram there is a unique $p\in E_{\eps,N}$ such that
\begin{equation}\label{eq:HUMvariational}
 (\mathcal O_Tp,\mathcal O_T\zeta)_{L^2(\pa\Om\times(0,T))}
 =\ell_\theta(\zeta)\qquad(\zeta\in E_{\eps,N}).
\end{equation}
Take $g_\eps=\mathcal O_Tp$ (changing its sign merely changes the convention
in Green's identity).  For the transposition solution with this boundary
value, integration by parts against every adjoint wave $\zeta$ gives
\[
 [\langle\pa_tv_\eps,\zeta\rangle_{H^{-1},H^1_0}
   -(v_\eps,\pa_t\zeta)_{L^2}]_{0}^{T}
 =-\int_0^T\!\int_{\pa\Om}g_\eps\pa_{\nu_\eps}\zeta\dd\sigma\dd t.
\]
Together with \eqref{eq:HUMfunctional}--\eqref{eq:HUMvariational}, this identity
shows that the symplectic pairing of the projected terminal state with every
terminal adjoint state vanishes.  Since the homogeneous adjoint wave group is
an isomorphism of $E_{\eps,N}$, its terminal states fill that space.  Hence both
low-frequency components of the terminal state vanish.  The same identity
also serves as the definition of the transposition solution for
$L^2$ boundary data and initial data in $L^2\times H^{-1}$.  Its well-posedness
follows from the standard energy-duality construction in HUM~\cite{Lions1988}.

\rev{Define}
\[
 \norm[H^{-1}_\eps]{F}=\norm[L^2]{S_\eps^{-1}F}.
\]
The spectral and variational definitions give
\[
 \norm[H^{-1}_\eps]{F}^2
 =\langle F,\cL_\eps^{-1}F\rangle
 =\sup_{0\ne z\in H^1_0(\Om)}
   \frac{|\langle F,z\rangle|^2}
        {\int_\Om A(x/\eps)\nabla z\cdot\overline{\nabla z}\dd x}.
\]
The form bounds in \eqref{eq:ellipticity} therefore give
\begin{equation}\label{eq:uniformnegativenorm}
 c\norm[H^{-1}(\Om)]{F}
 \leq\norm[H^{-1}_\eps]{F}
 \leq C\norm[H^{-1}(\Om)]{F},
\end{equation}
with constants independent of $\eps$.  Moreover $P_{\eps,N}$ is contractive
both in $L^2$ and in $\norm[H^{-1}_\eps]{\cdot}$, because it commutes with
$S_\eps$.  It follows that
\begin{equation}\label{eq:HUMdualnorm}
 \norm[E_{\eps,N}^*]{\ell_\theta}^2
 =\norm[L^2]{P_{\eps,N}\theta_{\eps,0}}^2
  +\norm[H^{-1}_\eps]{P_{\eps,N}\theta_{\eps,1}}^2,
\end{equation}
up to the harmless choice of the real or complex duality convention.

Finally, let $x_\theta\in E_{\eps,N}$ be the Riesz representative of
$\ell_\theta$.  Then $p=G_T^{-1}x_\theta$, and the minimal control satisfies
\begin{equation}\label{eq:HUMminimalnorm}
 \norm[L^2(\pa\Om\times(0,T))]{g_\eps}^2
 =\ip{G_T^{-1}x_\theta}{x_\theta}_{E_{\eps,N}}.
\end{equation}
Indeed, every other feasible control differs from $g_\eps$ by an element
orthogonal to $\Ran\mathcal O_T$ and therefore cannot have a smaller norm.
Equations \eqref{eq:HUMobservationbounds}, \eqref{eq:uniformnegativenorm}, and
\eqref{eq:HUMdualnorm} prove \eqref{eq:controlnorm}.  They also show that both
constants there are comparable to $T^{-1/2}$ for $T\geq T_0$.  One can also see the related implementation in
\cite[Section~4]{LinShen2022}.
\end{proof}

\section*{Acknowledgments}

During the preparation of this manuscript, the authors used ChatGPT (OpenAI) to assist with the development and presentation of the mathematical arguments and the drafting and revision of the text. The authors have reviewed and verified the arguments and results and take full responsibility for the accuracy and integrity of the manuscript.

\end{document}